\title{Dimension computations for tropical determinantal varieties and prevarieties}
\author{Dylan Zwick \\ \href{mailto:zwick@math.utah.edu}{zwick@math.utah.edu}}
\date{}
\begin{document}

\maketitle

\begin{abstract} 
  This paper proves that when the $r \times r$ minors of an $m \times n$ matrix of indeterminates are not a tropical basis then the tropical prevariety has greater dimension than the tropical variety. It proves the same for the $r \times r$ minors of an $n \times n$ symmetric matrix of indeterminates when $r > 4$.
\end{abstract}

A natural question to ask when a basis is not a tropical basis is how much larger the tropical prevariety is than the tropical variety. In particular, whether the basis ``fails big'', and the tropical prevariety has greater dimension than the tropical variety. In this paper, we prove this is the case whenever the $r \times r$ minors of an $m \times n$ matrix of indeterminates are not a tropical basis, and whenever the $r \times r$ minors of an $n \times n$ symmetric matrix of indeterminates are not a tropical basis and $n > 4$. 

In the first section, we review the foundational ideas from tropical geometry used in this paper. The second section proves the result. All statements about tropical ranks and symmetric tropical ranks for specific matrices, and in fact all specific computational claims of any sort, made in this paper can be verified using Maple code available online: 

http://www.math.utah.edu/\url{~}zwick/Dissertation/.

The author would like to thank the mathematics department of the University of Utah for support during the research for this paper, and in particular his advisor Aaron Bertram. He would also like to thank Melody Chan for pointing out the cocircuit matrix of the Fano matroid could be rearranged to be symmetric.

\section{Tropical preliminaries}

This section introduces the basic ideas from tropical geometry used in this paper, and reviews relevant results from general and symmetric matrices.

\subsection{Tropical basics}

The \emph{tropical semiring} $(\mathbb{R}, \oplus, \odot)$, is defined as the semiring with arithmetic operations:
\begin{center}
  $a \oplus b := min(a,b)$ \hspace{.1 in} and \hspace{.1 in} $a \odot b := a + b$.
\end{center}

A \emph{tropical monomial} $X_{1}^{a_{1}} \cdots X_{m}^{a_{m}}$ is a symbol, and represents a function equivalent to the linear form $\sum_{i} a_{i}X_{i}$ (standard addition and multiplication). 

A \emph{tropical polynomial} is a tropical sum of tropical monomials  
  \begin{center}
    $F(X_{1},\ldots,X_{m}) := \bigoplus_{a \in \mathcal{A}}C_{a}X_{1}^{a_{1}}X_{2}^{a_{2}} \cdots X_{m}^{a_{m}}$, \hspace{.1 in} with $\mathcal{A} \subset \mathbb{N}^{m}$, $C_{a} \in \mathbb{R}$
  \end{center}
  (tropical addition and multiplication), and represents a piecewise linear convex function $F: \mathbb{R}^{m} \rightarrow \mathbb{R}$. 

In this paper, tropical polynomials will be represented with upper case letters, while standard polynomials will be lower case.

The \emph{tropical hypersurface} $\textbf{V}(F)$ defined by a tropical polynomial $F$ is the locus of points $P \in \mathbb{R}^{m}$ such that at least two monomials in $F$ are minimal at $P$. This is also called the \emph{double-min locus} of $F$.

For example, the tropical hypersurface defined by the tropical polynomial
\begin{center}
  $X \oplus Y \oplus 0  = min\{x,y,0\}$
\end{center}
would include the point $(1,0)$, as both $Y$ and $0$ are minimal at that point, but would not include the point $(-1,0)$, as $X$ is uniquely minimal at that point. This is an example of a tropical line.

\vspace{.1 in}
\begin{tabular}{c}
  \centering
  \hspace{1in}\includegraphics[scale=1]{Tropical-Line.eps}
\end{tabular}

\subsection{Tropical bases}

Let $k$ be an algebraically closed field. Let $f \in k[x_{1},\ldots,x_{m}]$ be a polynomial. The locus of points $p \in k^{m}$ such that $f(p) = 0$ is a \emph{hypersurface}, and is denoted $\textbf{V}(f)$. Let $I$ be an ideal of $k[x_{1},\ldots,x_{m}]$. The ideal $I$ defines a \emph{algebraic variety}, (or \emph{variety}, for short) $\textbf{V}(I)$, in $k^{m}$, which is the set of points $p \in k^{m}$ such that $f(p) = 0$ for all $f \in I$. If $I = (f_{1},\ldots,f_{n})$  then the set $\{f_{1},\ldots,f_{n}\}$ is a \emph{basis} for $I$, and $\textbf{V}(I)$ is equal to the locus of points $p \in k^{m}$ such that $f_{i}(p) = 0$ for all $f_{i}$ in the basis. Put succinctly
\begin{center}
  $\textbf{V}(I) = \bigcap \textbf{V}(f_{i})$.
\end{center}
So, a variety is an intersection of hypersurfaces. By the Hilbert basis theorem every ideal of $k[x_{1},\ldots,x_{m}]$ is finitely generated, so any variety is a finite intersection of hypersurfaces.

In the tropical setting there is an analog of a hypersurface, and we would like an analog of a variety. It might seem natural to define a tropical variety as the intersection of a finite set of tropical hypersurfaces, but these sets do not always have the properties we need in order for them to be useful analogs of algebraic varieties, and we instead call these sets tropical prevarieties. 

A \emph{tropical prevariety} $\textbf{V}(F_{1},\ldots,F_{n})$ is a finite intersection of tropical hypersurfaces:  
\begin{center}
  $\textbf{V}(F_{1},\ldots,F_{n}) = \bigcap_{i = 1}^{n} \textbf{V}(F_{i})$.
\end{center}

A tropical variety is defined differently. Let $K = \mathbb{C}\{\{t\}\}$ be the set of formal power series $a = c_{1}t^{a_{1}} + c_{2}t^{a_{2}} + \cdots$, where $a_{1} < a_{2} < a_{3} < \cdots$ are rational numbers that have a common denominator. These are called Puiseux series, and this set is an algebraically closed field of characteristic zero. For any nonzero element $a \in K$ define the degree of $a$ to be the value of the leading exponent $a_{1}$. This gives us a degree map $deg : K^{*} \rightarrow \mathbb{Q}$. For any two elements $a,b \in K^{*}$ we have
\begin{center}
  $deg(ab) = deg(a) + deg(b) = deg(a) \odot deg(b)$.
\end{center}
Generally, we also have
\begin{center}
  $deg(a + b) = min(deg(a),deg(b)) = deg(a) \oplus deg(b)$.
\end{center}
The only case when this addition relation is not true is when $a$ and $b$ have the same degree, and the coefficients of the leading terms cancel.

We would like to do tropical arithmetic over $\mathbb{R}$, and not just over $\mathbb{Q}$, so we enlarge the field of Puisieux series to allow this. Define the set $\tilde{K}$ by
\begin{center}
  $\tilde{K} = \left\{\sum_{\alpha \in A} c_{\alpha}t^{\alpha} | A \subset \mathbb{R} \text{ well-ordered}, c_{\alpha} \in \mathbb{C}\right\}$.
\end{center}
This is the set of Hahn series, and it is an algebraically closed field of characteristic zero containing the Puisieux series. We define a tropical variety in terms of a variety over $\tilde{K}$.

The degree map on $(\tilde{K}^{*})^{m}$ is the map $\mathcal{T}$ taking points $(p_{1},\ldots,p_{m}) \in (\tilde{K}^{*})^{m}$ to points $(deg(p_{1}),deg(p_{2}),\ldots,deg(p_{m})) \in \mathbb{R}^{m}$. A tropical variety is the image of a variety in $(\tilde{K}^{*})^{m}$ under the degree map. We call this image the \emph{tropicalization} of a set of points in $(\tilde{K}^{*})^{m}$. The tropicalization of a polynomial $f \in \tilde{K}[x_{1},\ldots,x_{m}]$ is the tropical polynomial $\mathcal{T}(f)$ formed by tropicalizing the coefficients of $f$, and converting addition and multiplication into their tropical counterparts. For example, the tropicalization of the polynomial 
\begin{center}
  $f = 3t^{2}xy - 7tx^{3}$ 
\end{center}
is the tropical polynomial
\begin{center}
  $\mathcal{T}(f) = 2XY \oplus 1X^{3}$.
\end{center}

In an unpublished manuscript, Mikhail Kapranov proved the following useful and fundamental result.

\newtheorem{thm}{Theorem}
\begin{thm}[Kapranov's Theorem]
  For $f \in \tilde{K}[x_{1},\ldots,x_{m}]$ the tropical variety $\mathcal{T}(\textbf{V}(f))$ is equal to the tropical hypersurface $\textbf{V}(\mathcal{T}(f))$ determined by the tropical polynomial $\mathcal{T}(f)$.
\end{thm}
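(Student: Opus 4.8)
The plan is to establish the two inclusions $\mathcal{T}(\textbf{V}(f)) \subseteq \textbf{V}(\mathcal{T}(f))$ and $\textbf{V}(\mathcal{T}(f)) \subseteq \mathcal{T}(\textbf{V}(f))$ separately. Throughout, write $f = \sum_{a \in \mathcal{A}} c_{a}x^{a}$ with every $c_{a} \neq 0$, so that $\mathcal{T}(f) = \bigoplus_{a} C_{a}X^{a}$ with $C_{a} = deg(c_{a})$, and for $w \in \mathbb{R}^{m}$ abbreviate $\langle a,w\rangle = \sum_{i} a_{i}w_{i}$, so the value of the tropical monomial $C_{a}X^{a}$ at $w$ is $C_{a} + \langle a,w\rangle$.

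For the first inclusion, suppose $p \in (\tilde{K}^{*})^{m}$ satisfies $f(p) = 0$, and put $w = \mathcal{T}(p)$. Since $deg$ is multiplicative, $deg(c_{a}p^{a}) = C_{a} + \langle a,w\rangle$ for each term. If the minimum of these degrees over $a \in \mathcal{A}$ were attained by a unique index $a^{*}$, then $c_{a^{*}}p^{a^{*}}$ would be the strictly lowest-degree term of the sum $f(p)$, so the leading term of $f(p)$ survives with nonzero coefficient, contradicting $f(p) = 0$. Hence the minimum is attained at least twice, i.e. $w \in \textbf{V}(\mathcal{T}(f))$. (This direction uses only that $deg$ is a valuation, not that $\tilde{K}$ is algebraically closed.)

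For the reverse inclusion, fix $w \in \textbf{V}(\mathcal{T}(f))$; we must produce $p \in (\tilde{K}^{*})^{m}$ with $f(p) = 0$ and $\mathcal{T}(p) = w$. First I would normalize: the monomial substitution $x_{i} \mapsto t^{w_{i}}x_{i}$ is an automorphism of $(\tilde{K}^{*})^{m}$ that shifts tropicalizations by $w$ and replaces each $C_{a}$ by $C_{a} + \langle a,w\rangle$, so after also dividing by a suitable power of $t$ we reduce to the case $w = 0$ with every $C_{a} \geq 0$ and $C_{a} = 0$ for at least two indices (this is where the double-min hypothesis on $w$ is used). Now every $c_{a}$ has nonnegative degree, and reducing each $c_{a}$ to the coefficient of its $t^{0}$-term produces a Laurent polynomial $\bar{f}$ over $\mathbb{C}$ with at least two terms, hence not a monomial. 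A non-monomial Laurent polynomial over $\mathbb{C}$ has a zero $\alpha$ in the torus $(\mathbb{C}^{*})^{m}$: after multiplying by a monomial it is an honest polynomial with nonzero constant term, and an easy dimension count shows its zero hypersurface meets the open torus. It then remains to lift $\alpha$ to a root $p$ of $f$ with every $deg(p_{i}) = 0$.

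The lifting is the crux. Factor $\bar{f}$ over $\mathbb{C}[x_{1}^{\pm 1},\ldots,x_{m}^{\pm 1}]$ as a monomial times $g_{1}^{e_{1}}\cdots g_{s}^{e_{s}}$ with the $g_{i}$ irreducible non-monomials (at least one exists), and choose $\alpha \in (\mathbb{C}^{*})^{m}$ to be a \emph{smooth} point of $\textbf{V}(g_{1})$ lying on none of the other $\textbf{V}(g_{j})$, with $\partial g_{1}/\partial x_{j}(\alpha) \neq 0$ for some $j$ and with the coordinate line through $\alpha$ in the $x_{j}$-direction not contained in $\textbf{V}(g_{1})$ — all possible since the excluded loci are proper subvarieties of $\textbf{V}(g_{1})$. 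Specializing the remaining variables, $\phi(x_{j}) := f(\alpha_{1},\ldots,x_{j},\ldots,\alpha_{m})$ has coefficients of nonnegative degree and its $t^{0}$-reduction $\bar{\phi} = \bar{f}(\alpha_{1},\ldots,x_{j},\ldots,\alpha_{m})$ is a nonzero univariate polynomial vanishing at $\alpha_{j} \neq 0$; then a slope-$0$ segment of the Newton polygon of $\phi$ over $\tilde{K}$ produces a root $\rho \in \tilde{K}^{*}$ of degree $0$ reducing to $\alpha_{j}$, and setting $p_{j} = \rho$, $p_{i} = \alpha_{i}$ for $i \neq j$ gives $f(p) = 0$ and $\mathcal{T}(p) = 0$, finishing the reduced case. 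I would treat the Newton polygon input (equivalently, the $m = 1$ case of the theorem, classical Newton--Puiseux theory) either as known or as a base case proved by the Newton polygon algorithm. I expect this lifting step to be the main obstacle: when $\bar{f}$ has repeated factors its entire zero locus is singular, so a direct appeal to Hensel's lemma fails, and one is forced onto a generic one-variable slice whose Newton polygon must be analyzed — with some bookkeeping needed to guarantee the slice is nondegenerate (that $\bar{\phi} \not\equiv 0$ while still $\bar{\phi}(\alpha_{j}) = 0$).
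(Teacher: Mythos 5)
The paper does not actually prove this statement: it cites Kapranov's theorem as a known result from an unpublished manuscript, so there is no in-paper argument to compare against. Your proof is the standard one (essentially the argument in Maclagan--Sturmfels, Theorem 3.1.3) and it is correct modulo the one ingredient you explicitly flag, namely Newton--Puiseux/Newton-polygon theory over $\tilde{K}$, which is legitimate to take as the $m=1$ base case since $\tilde{K}$ has divisible value group and algebraically closed residue field. The easy inclusion via the valuation axioms is fine. In the hard direction your normalization, the reduction to the initial form $\bar{f}$ (which has at least two terms precisely because the minimum is attained twice at $w$), and the slicing argument all work. Two small remarks: the smoothness, irreducible-factorization, and nonvanishing-derivative conditions you impose on $\alpha$ are never used by the Newton-polygon argument you actually run --- all you need is $\alpha\in(\mathbb{C}^{*})^{m}$ with $\bar{f}(\alpha)=0$ and some coordinate direction $j$ in which the slice $\bar{\phi}=\bar{f}(\alpha_{1},\ldots,x_{j},\ldots,\alpha_{m})$ is not identically zero, which is a much cheaper genericity statement (those conditions would only be needed for a Hensel-style lift, which, as you note, you are deliberately avoiding). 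Second, the existence of the slope-$0$ segment of positive length is automatic once $\bar{\phi}$ has the nonzero root $\alpha_{j}$, since a monomial has no nonzero roots; it is worth saying this explicitly, as it is the point where the double-min hypothesis finally pays off in the univariate reduction.
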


Given Kapranov's theorem if $I = (f_{1},\ldots,f_{n})$, then obviously the tropical prevariety determined by the set of tropical polynomials $\{\mathcal{T}(f_{1}),\ldots,\mathcal{T}(f_{n})\}$ contains the tropical variety determined by $I$:
\begin{center}
  $\mathcal{T}(\textbf{V}(I)) \subseteq \bigcap_{i = 1}^{n} \textbf{V}(\mathcal{T}(f_{i}))$.
\end{center}

While Kapranov's theorem gives us the two sets are equal if $n = 1$, in general the containment may be strict. For example, the lines in $(\tilde{K}^{*})^{2}$ defined by the linear equations
\begin{center}
  $f = 2x + y + 1$, \hspace{.1 in} and \hspace{.1 in} $g = tx + ty + 1$,
\end{center}
intersect at the point $(t^{-1}-1,-2t^{-1}+1)$. The tropicalization of this point is $(-1,-1)$, and so if $I = (f,g)$ then
\begin{center}
  $\mathcal{T}(\textbf{V}(I)) = (-1,-1)$.
\end{center}
However, is we tropicalize the linear equations we get:
\begin{center}
  $\mathcal{T}(f) = X \oplus Y \oplus 0$, \hspace{.1 in} and \hspace{.1 in} $\mathcal{T}(g) = 1X \oplus 1Y \oplus 0$.
\end{center}
Each of $\textbf{V}(\mathcal{T}(f))$ and $\textbf{V}(\mathcal{T}(g))$ is a tropical line, and their intersection is the tropical prevariety consisting of all points $(a,a)$ with $a \leq -1$.

\vspace{.1 in}
\begin{tabular}{c}
  \centering
  \hspace{1in}\includegraphics[scale=1]{Two-Tropical-Lines-Intersecting-at-a-Ray.eps}
\end{tabular}

This tropical prevariety properly contains the tropical variety $(-1,-1)$, but the prevariety is clearly much larger. That the intersection of two distinct tropical lines is not necessarily a point is a motivating example of why we do not define a tropical variety to be a finite intersection of tropical hypersurfaces.

\subsection{Kapranov and tropical Rank}

In \cite{dss}, Develin, Santos, and Sturmfels define three notions of matrix rank coming from tropical geometry: the Barvinok rank, the Kapranov rank, and the tropical rank. In this paper we focus on the symmetric analogs of the Kapranov and tropical ranks.

The \emph{tropical rank} of an $m \times n$ matrix $A = (A_{i,j}) \in \mathbb{R}^{m \times n}$ is the smallest number $r \leq min(m,n)$ such that $A$ is not in the tropical prevariety formed by the $r \times r$ minors of an $m \times n$ matrix of indeterminates.

A \emph{lift} of the matrix $A$ is a matrix $\tilde{A} = (\tilde{a}_{i,j}) \in (\tilde{K}^{*})^{m \times n}$ such that $deg(\tilde{a}_{i,j}) = A_{i,j}$ for all $i,j$. The \emph{Kapranov rank} of a matrix is the smallest rank of any lift of the matrix. Equivalently, the Kapranov rank is the smallest number $r \leq min(m,n)$ such that $A$ is not in the tropical variety formed by the $r \times r$ minors of an $m \times n$ matrix of indeterminates.

A square matrix $A = (A_{i,j}) \in \mathbb{R}^{n \times n}$ is \emph{tropically singular} if the minimum
  \begin{center}
    $tropdet(A) := \bigoplus_{\sigma \in S_{n}} A_{1,\sigma(1)} \odot A_{2,\sigma(2)} \odot \cdots \odot A_{n,\sigma(n)}$
  \end{center}
  is attained at least twice is the tropical sum. Here $S_{n}$ denotes the symmetric group on $\{1,2,\ldots,n\}$. We call the number $tropdet(A)$ defined above the \emph{tropical determinant} of $A$, and we say any permutation $\sigma$ such that
  \begin{center}
    $tropdet(A) = A_{1,\sigma(1)} \odot A_{2,\sigma(2)} \odot \cdots \odot A_{n,\sigma(n)}$
  \end{center}
  \emph{realizes} the tropical determinant. So, equivalently, a square matrix $A$ is tropically singular if more than one permutation realizes the tropical determinant.

More generally, suppose $A$ is an $m \times n$ real matrix, and $\{i_{1},i_{2},\ldots,i_{r}\}$ and $\{j_{1},j_{2},\ldots,j_{r}\}$ are subsets of $\{1,\ldots,m\}$ and $\{1,\ldots,n\}$, respectively. These subsets define an $r \times r$ submatrix $A'$ of $A$, with row indices $\{i_{1},\ldots,i_{r}\}$ and column indices $\{j_{1},\ldots,j_{r}\}$. A tropical monomial of the form
\begin{center}
  $\bigodot_{k = 1}^{r}X_{i_{k},\rho(i_{k})}$,
\end{center}
where $\rho$ is a bijection from the row indices to the column indices, is a \emph{minimizing monomial} for the submatrix $A'$ if, over all monomials defined by bijections from $\{i_{1},i_{2},\ldots,i_{r}\}$ to $\{j_{1},j_{2},\ldots,j_{r}\}$, this monomial is minimal under the valuation $X_{i,j} \mapsto A_{i,j}$. An $r \times r$ submatrix of $A$ is tropically singular if it has more than one minimizing monomial.

The tropical variety defined by the $r \times r$ minors of an $m \times n$ matrix of indeterminates is contained in the tropical prevariety defined by the same minors, and therefore
\begin{center}
  tropical rank $\leq$ Kapranov rank.
\end{center}

A natural question to ask about Kapranov and tropical rank is when they are necessarily equal. In other words, for what values $r,m,n$ does tropical rank $r$ imply Kapranov rank $r$ for any $m \times n$ matrix.

This question was answered through the combined work of Develin, Santos, and Sturmfels \cite{dss}, Chan, Jensen, and Rubei \cite{cjr}, and Shitov \cite{sh2}. The result is named after Shitov \cite{ms}, as he completed the project.

\begin{thm}[Shitov's Theorem]
  The $r \times r$ minors of an $m \times n$ matrix of indeterminates form a tropical basis if and only if:
  \begin{itemize}
    \item $r = min(m,n)$, or
    \item $r \leq 3$, or
    \item $r = 4$ and $min(m,n) \leq 6$.
  \end{itemize}
\end{thm}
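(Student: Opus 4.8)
The plan is to prove the two directions of the biconditional separately, after first recasting each as a comparison between tropical and Kapranov rank. The prevariety cut out by the $r \times r$ minors of the matrix of indeterminates is exactly $\{A \in \mathbb{R}^{m \times n} : A$ has tropical rank $\le r-1\}$, and the corresponding tropical variety is $\{A : A$ has Kapranov rank $\le r-1\}$; since tropical rank $\le$ Kapranov rank always holds, the $r \times r$ minors form a tropical basis precisely when tropical rank $\le r-1$ forces Kapranov rank $\le r-1$, that is, when every matrix all of whose $r \times r$ submatrices are tropically singular admits a lift over $\tilde{K}$ of rank $< r$. For $r = 1$ this is vacuous: a single tropical monomial never attains its minimum twice, so both sets are empty. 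For $r = 2$ the hypothesis forces $A_{ik} + A_{jl} = A_{il} + A_{jk}$ for all index pairs, hence $A_{ij} = u_{i} + v_{j}$ for suitable $u \in \mathbb{R}^{m}$, $v \in \mathbb{R}^{n}$, and then $\tilde{a}_{ij} = t^{u_{i}} t^{v_{j}}$ is a rank-$1$ lift. For $r = 3$ one invokes the Develin--Santos--Sturmfels structure theorem for matrices of tropical rank $\le 2$ --- their columns, read in tropical projective space, lie on a common tropical line, equivalently arise from a point configuration on a metric tree --- and from such a configuration one writes down an explicit rank-$2$ lift, so tropical rank $\le 2$ and Kapranov rank $\le 2$ coincide \cite{dss}. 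For $r = \min(m,n)$ with $m = n$ the ideal of the unique $r \times r$ minor is principal, so Kapranov's Theorem gives the equality of variety and prevariety at once; for $m \ne n$, say $m < n$, one invokes the theorem of \cite{dss} that the maximal minors of a matrix of indeterminates form a tropical basis.

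The one remaining positive case, $r = 4$ with $\min(m,n) \le 6$, is the first hard step: one must show that any such matrix all of whose $4 \times 4$ submatrices are tropically singular has Kapranov rank $\le 3$. By a standard reduction that deletes tropically redundant columns one may assume $m = n = 6$, after which one classifies the finitely many combinatorial patterns in which the $4 \times 4$ minors can be simultaneously singular and, pattern by pattern, either constructs a rank-$3$ lift directly or certifies by a finite, computer-assisted search that one exists. This is the analysis of \cite{cjr}, completed by \cite{sh2} in the case $\min(m,n) = 6$; the intricacy of the case division is why the classification was settled only late, and I expect this to be one of the two principal obstacles.

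For the negative direction, assume none of the three conditions holds, so $4 \le r < \min(m,n)$ and we are not in the case $r = 4$, $\min(m,n) \le 6$; thus either $r = 4$ with $\min(m,n) \ge 7$, or $r \ge 5$ with $\min(m,n) \ge r+1$. Two ingredients suffice. The first is a monotonicity lemma: a witness matrix of size $m_{0} \times n_{0}$ --- one with tropical rank $\le r-1$ but Kapranov rank $\ge r$ --- yields a witness of every size $m \times n$ with $m \ge m_{0}$, $n \ge n_{0}$, obtained by repeatedly adjoining duplicate rows and columns; a duplicated row creates no new tropically nonsingular square submatrix (any such submatrix using both copies has two equal rows), so the tropical rank stays $\le r-1$, while the original matrix remains a submatrix, so the Kapranov rank stays $\ge r$. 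The second ingredient is a supply of base witnesses. For $r = 4$ one takes the $7 \times 7$ cocircuit matrix of the Fano matroid, whose tropical rank is $3$ but whose Kapranov rank is $4$, the gap reflecting the non-representability of the Fano matroid over a field of characteristic zero; monotonicity then extends this to all $\min(m,n) \ge 7$. For each $r \ge 5$ one needs a witness at the critical size $\min(m,n) = r+1$; these do not arise from the Fano example by duplication or generic padding, and supplying them is the content of Shitov's construction \cite{sh2}, which builds, recursively in $r$, explicit matrices that raise $r$ and the bound $\min(m,n) = r+1$ together while preserving the gap between the two ranks. This construction is the second principal obstacle; everything else --- the reformulation, the cases $r \le 3$, and the monotonicity lemma --- is routine.
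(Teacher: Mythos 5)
The paper does not prove this statement: it is quoted as a known result, attributed to the combined work of \cite{dss}, \cite{cjr}, and \cite{sh2}, so there is no internal proof to compare your attempt against. Your outline is a faithful reconstruction of how those papers establish it. The reformulation of ``tropical basis'' as ``tropical rank $\leq r-1$ forces Kapranov rank $\leq r-1$,'' the easy cases $r \leq 2$ and $r = \min(m,n)$, the tree-configuration argument for $r = 3$, the row- and column-duplication monotonicity lemma, and the Fano cocircuit matrix as the base witness for $r = 4$, $\min(m,n) = 7$ are all correct and correctly attributed. However, the two steps you yourself flag as the principal obstacles --- the positive case $r = 4$, $\min(m,n) \leq 6$, and the construction of witnesses at the critical size $\min(m,n) = r+1$ for each $r \geq 5$ --- are precisely where the substantive mathematics lives, and you defer both entirely to \cite{cjr} and \cite{sh2} rather than supplying arguments. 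As a self-contained proof the proposal is therefore incomplete; as an account of the theorem's provenance and proof architecture it is accurate and consistent with the paper's own treatment, which likewise cites rather than proves the result.
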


Denote the set of $m \times n$ matrices with tropical rank $r$ by $T_{m,n,r}$, and the set of $m \times n$ matrices with Kapranov rank $r$ by $\tilde{T}_{m,n,r}$.

\subsection{Symmetric Kapranov and symmetric tropical rank}

The symmetric Kapranov and symmetric tropcial ranks are defined analogously to their general counterparts.

The \emph{symmetric tropical rank} of an $n \times n$ symmetric matrix $A = (A_{i,j}) \in \mathbb{R}^{n \times n}$ is the smallest number $r \leq n$ such that $A$ is not in the tropical prevariety formed by the $r \times r$ minors of an $n \times n$ symmetric matrix of indeterminates.

The \emph{symmetric Kapranov rank} of an $n \times n$ symmetric matrix $A = (A_{i,j}) \in \mathbb{R}^{n \times n}$ is the smallest rank of any lift to a symmetric matrix. Equivalently, the symmetric Kapranov rank is the smallest number $r \leq n$ such that $A$ is not in the tropical variety formed by the $r \times r$ minors of an $n \times n$ symmetric matrix of indeterminates.

The tropical variety defined by the $r \times r$ minors of an $n \times n$ symmetric matrix of indeterminates is contained in the tropical prevariety defined by the same minors, and therefore
\begin{center}
  symmetric tropical rank $\leq$ symmetric Kapranov rank.
\end{center}

As a lift to an $n \times n$ symmetric matrix is a lift to an $n \times n$ matrix, we must have
\begin{center}
  Kapranov rank $\leq$ symmetric Kapranov rank.
\end{center}

For symmetric matrices, we say a square submatrix is \emph{symmetrically tropically singular} if it has more than one minimizing monomial given the equivalence $X_{i,j} = X_{j,i}$.

The tropical rank of a matrix can be equivalently defined as the smallest value of $r$ such that the matrix has a tropically nonsingular $r \times r$ submatrix, and similarly the symmetric tropical rank of a symmetric matrix can be equivalently defined as the smallest value of $r$ such that the symmetric matrix has a symmetrically tropically nonsingular $r \times r$ submatrix. 

If an $r \times r$ submatrix of a symmetric $n \times n$ matrix has two distinct minimizing monomials, given the equivalence $X_{i,j} = X_{j,i}$, then a fortiori it has two distinct minimizing monomials without that equivalence, and so
\begin{center}
  tropical rank $\leq$ symmetric tropical rank.
\end{center}

For example, the tropical determinant of a $3 \times 3$ matrix of indeterminates 
\begin{center}
  $\left(\begin{array}{ccc} X_{1,1} & X_{1,2} & X_{1,3} \\ X_{2,1} & X_{2,2} & X_{2,3} \\ X_{3,1} & X_{3,2} & X_{3,3} \end{array}\right)$
\end{center}
is
\begin{center}
  $X_{1,1}X_{2,2}X_{3,3} \oplus X_{1,2}X_{2,3}X_{3,1} \oplus X_{1,3}X_{2,1}X_{3,2} \oplus X_{1,1}X_{2,3}X_{3,2} \oplus X_{1,2}X_{2,1}X_{3,3} \oplus X_{1,3}X_{2,2}X_{3,1}$.
\end{center}
For the matrix
\begin{center}
    $\left(\begin{array}{ccc} 1 & 0 & 0 \\ 0 & 1 & 0 \\ 0 & 0 & 1 \end{array}\right)$
  \end{center}
there are two minimizing monomials, $X_{1,2}X_{2,3}X_{3,1}$ and $X_{1,3}X_{2,1}X_{3,2}$, and so the matrix is tropically singular. However, under the equivalence $X_{i,j} = X_{j,i}$ the tropical determinant becomes
\begin{center}
  $X_{1,1}X_{2,2}X_{3,3} \oplus X_{1,2}X_{2,3}X_{1,3} \oplus X_{1,1}X_{2,3}^{2} \oplus X_{1,2}^{2}X_{3,3} \oplus X_{1,3}^{2}X_{2,2}$,
\end{center}
and the monomial $X_{1,2}X_{2,3}X_{1,3}$ is the unique minimizing monomial, so the matrix is not symmetrically tropically singular.

In \cite{z1} the author proves the following partial analog of Shitov's theorem for symmetric matrices.

\begin{thm}
    The $r \times r$ minors of an $n \times n$ symmetric matrix of indeterminates are a tropical basis if $r = 2$, $r = 3$, or $r = n$. The minors are not a tropical basis if $4 < r < n$, or if $r = 4$ and $n > 12$.
\end{thm}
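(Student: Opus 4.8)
We sketch the strategy; filling in the details is the bulk of \cite{z1}. Saying that the $r \times r$ minors of the $n \times n$ symmetric matrix of indeterminates form a tropical basis means that the tropical prevariety they define equals the corresponding tropical variety, i.e. that every real symmetric $A$ all of whose $r \times r$ submatrices are symmetrically tropically singular admits a symmetric lift of rank less than $r$ (one direction is automatic, since the variety always sits inside the prevariety). So the positive half of the statement ($r \in \{2, 3, n\}$) is a lifting problem, while the negative half requires, for each relevant pair $(r,n)$, a symmetric $n \times n$ matrix all of whose $r \times r$ submatrices are symmetrically tropically singular but which has no symmetric lift of rank less than $r$.

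For the positive half: when $r = n$ there is a single minor, the determinant of the symmetric matrix of indeterminates, so the claim is essentially Kapranov's Theorem --- one need only check that the tropicalization of that determinant is the tropical polynomial whose double-min locus is exactly the set of symmetrically tropically singular matrices, i.e. that no leading monomials cancel along the relevant locus. When $r = 2$ the argument is short: if every $2 \times 2$ submatrix of $A$ is symmetrically tropically singular, then in particular the ones supported on index pairs $\{i,j\}$ force $2A_{i,j} = A_{i,i} + A_{j,j}$ for all $i,j$, and then $\tilde a_{i,j} := \alpha_i \alpha_j$ with $\deg \alpha_i = A_{i,i}/2$ is a symmetric lift of rank one, so the symmetric Kapranov rank is at most $1$. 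When $r = 3$, $A$ then has ordinary tropical rank at most $2$, so --- since the $3 \times 3$ minors of a general matrix form a tropical basis, as recorded in Shitov's Theorem above --- $A$ has an ordinary lift of rank at most $2$; the remaining work is to make this lift symmetric, either by checking that the Develin--Santos--Sturmfels lifting procedure can be taken to commute with transposition, so that it returns a symmetric lift on symmetric input, or by describing the structure of symmetric matrices of symmetric tropical rank at most $2$ and building the two rank-one summands of a symmetric lift by hand, with leading coefficients chosen generically so that no cancellation occurs.

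For the negative half the engine is the cocircuit matrix of the Fano matroid, a $7 \times 7$ zero-one matrix $F_0$ all of whose $4 \times 4$ submatrices are tropically singular but which has no lift of rank less than $4$, the latter because the Fano matroid is representable only in characteristic $2$ and so any characteristic-zero lift of $F_0$ has rank at least $4$ (this is the Develin--Santos--Sturmfels computation of the Kapranov rank). By a rearrangement observed by Chan, $F_0$ may be made symmetric; call the result $F$. For the range $r \geq 5$ one first verifies, a finite check carried out with the accompanying Maple code, that every $5 \times 5$ submatrix of $F$ is symmetrically tropically singular, and then proves the key fact that $F$ has no symmetric lift of rank less than $5$: a symmetric lift of rank at most $4$ would still encode a characteristic-zero realization forbidden by the Fano matroid. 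This makes $F$ a counterexample for $r = 5$, $n = 7$; to reach larger $r$ one adjoins a generic symmetric $j \times j$ block, with the off-diagonal entries taken large enough that degree considerations make the matrix behave like a direct sum, which raises both the symmetric tropical rank and the symmetric Kapranov rank by $j$, and the remaining small cases (notably $n = r+1$) are obtained from analogous overlapping constructions or checked directly. For $r = 4$ one instead wants a symmetric matrix all of whose $4 \times 4$ submatrices are symmetrically tropically singular but with no symmetric lift of rank less than $4$: one builds a symmetric block matrix that contains a copy of $F_0$ among its submatrices, so that every lift --- symmetric or not --- has rank at least $4$, while choosing the diagonal blocks so that every $4 \times 4$ symmetric minor is symmetrically tropically singular, and carrying this out turns out to require $n \geq 13$. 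Finally, from any base example one reaches all larger $n$ by duplicating a row together with the corresponding column: the two equal rows supply an extra minimizing monomial in every minor that uses both copies, so all the relevant submatrices stay symmetrically tropically singular, while the original matrix survives as a symmetric submatrix, so no symmetric lift of the enlarged matrix can have smaller rank.

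The main obstacle is the pair of lower bounds on symmetric Kapranov rank --- that $F$ has no symmetric lift of rank less than $5$, and that the $r = 4$ example has no symmetric lift of rank less than $4$. Unlike the symmetric-tropical-rank computations and the padding steps, which are finite checks or formal, these require showing that a low-rank symmetric lift over the Hahn series would force a realization of the Fano matroid, or of a configuration built from it, over a field of characteristic zero, which is impossible; adapting the Develin--Santos--Sturmfels argument so that it respects the symmetry of the lift, and in particular pinning down how large $n$ must be before a symmetric Fano-type example all of whose $4 \times 4$ submatrices are symmetrically tropically singular can exist, is where the real content lies. A secondary difficulty is the symmetrization step in the positive case $r = 3$.
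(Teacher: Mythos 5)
First, note that this paper does not actually prove this theorem: it is quoted from \cite{z1}, so there is no internal proof to compare against. Judged on its own terms, your outline has the right architecture (explicit lifts for the positive cases, Fano-type obstructions plus padding for the negative cases), and several pieces are correct: the $r=2$ argument via the principal minors $2A_{i,j}=A_{i,i}+A_{j,j}$ and the rank-one lift $\tilde a_{i,j}=\alpha_i\alpha_j$ is clean; the reduction of $r=3$ to the general $3\times 3$ tropical-basis result (using that symmetric tropical singularity implies ordinary tropical singularity) with the symmetrization of the lift as the remaining work is the right framing; and the $r=4$, $n\geq 13$ construction embedding the Fano cocircuit matrix as an off-diagonal block matches the $13\times 13$ example in Section 2.1 of this paper.

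There is, however, a genuine gap in your base case for $r\geq 5$. You propose the symmetrized Fano matrix $F$ as the counterexample for $r=5$, $n=7$, arguing that ``a symmetric lift of rank at most $4$ would still encode a characteristic-zero realization forbidden by the Fano matroid.'' The Fano representability obstruction only rules out lifts of rank at most $3$; the Kapranov rank of the Fano cocircuit matrix is exactly $4$, so rank-$4$ lifts exist, and the question of whether a \emph{symmetric} rank-$4$ lift exists is not touched by that obstruction. So your argument does not establish that $F$ has symmetric Kapranov rank at least $5$. The example actually used (see Section 2.1 here) is the symmetrized $6\times 6$ Shitov matrix: it has symmetric tropical rank $4$, and its \emph{ordinary} Kapranov rank is already $5$, so its symmetric Kapranov rank is at least $5$ for free --- no symmetric refinement of the obstruction is needed. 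A secondary caution: your ``duplicate a row and the corresponding column'' step for increasing $n$ is not automatic in the symmetric setting, because the two permutations obtained by swapping the duplicated indices can collapse to the same monomial under $X_{i,j}=X_{j,i}$ (this is precisely why the symmetrized Fano matrix has symmetric tropical rank $4$ rather than $3$, and why Lemma 4 of this paper requires a long case analysis).
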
 

Denote the set of $n \times n$ symmetric matrices with symmetric tropical rank $r$ by $S_{n,r}$, and the set of $n \times n$ symmetric matrices with Kapranov rank $r$ by $\tilde{S}_{n,r}$.

\section{The dimensions of determinantal tropical varieties and tropical prevarieties}

The examination of when the minors of a standard matrix do not form a tropical basis begins with a couple of foundational examples. The same is true in the symmetric case. 

\subsection{Foundational examples}

In \cite{dss}, Develin, Santos, and Sturmfels proved the cocircuit matrix of the Fano matroid,
\begin{center}
  $\left(\begin{array}{ccccccc} 1 & 1 & 0 & 1 & 0 & 0 & 0 \\ 0 & 1 & 1 & 0 & 1 & 0 & 0 \\ 0 & 0 & 1 & 1 & 0 & 1 & 0 \\ 0 & 0 & 0 & 1 & 1 & 0 & 1 \\ 1 & 0 & 0 & 0 & 1 & 1 & 0 \\ 0 & 1 & 0 & 0 & 0 & 1 & 1 \\ 1 & 0 & 1 & 0 & 0 & 0 & 1 \end{array}\right)$,
\end{center}
has tropical rank three but Kapranov rank four. If we permute the rows of this matrix with the permutation given by the disjoint cycle decomposition $(27)(36)(45)$ we get the symmetric matrix
\begin{center}  
  $\left(\begin{array}{ccccccc} 1 & 1 & 0 & 1 & 0 & 0 & 0 \\ 1 & 0 & 1 & 0 & 0 & 0 & 1 \\ 0 & 1 & 0 & 0 & 0 & 1 & 1 \\ 1 & 0 & 0 & 0 & 1 & 1 & 0 \\ 0 & 0 & 0 & 1 & 1 & 0 & 1 \\ 0 & 0 & 1 & 1 & 0 & 1 & 0 \\ 0 & 1 & 1 & 0 & 1 & 0 & 0 \end{array}\right)$.
\end{center}
This symmetric matrix has standard tropical rank three, but symmetric tropical rank four, and is therefore \emph{not} an example of a matrix with symmetric tropical rank three but greater symmetric Kapranov rank.

This matrix can, however, be used to construct a symmetric matrix with symmetric tropical rank three, but greater symmetric Kapranov rank:
\begin{center}
  $\left(\begin{array}{ccccccccccccc} 0 & 0 & 0 & 0 & 0 & 0 & 1 & 1 & 0 & 1 & 0 & 0 & 0 \\ 0 & 0 & 0 & 0 & 0 & 0 & 1 & 0 & 1 & 0 & 0 & 0 & 1 \\ 0 & 0 & 0 & 0 & 0 & 0 & 0 & 1 & 0 & 0 & 0 & 1 & 1 \\ 0 & 0 & 0 & 0 & 0 & 0 & 1 & 0 & 0 & 0 & 1 & 1 & 0 \\ 0 & 0 & 0 & 0 & 0 & 0 & 0 & 0 & 0 & 1 & 1 & 0 & 1 \\ 0 & 0 & 0 & 0 & 0 & 0 & 0 & 0 & 1 & 1 & 0 & 1 & 0 \\ 1 & 1 & 0 & 1 & 0 & 0 & 0 & 1 & 1 & 0 & 1 & 0 & 0 \\ 1 & 0 & 1 & 0 & 0 & 0 & 1 & 0 & 0 & 0 & 0 & 0 & 0 \\ 0 & 1 & 0 &0 & 0 & 1 & 1 & 0 & 0 & 0 & 0 & 0 & 0 \\ 1 & 0 & 0 & 0 & 1 & 1 & 0 & 0 & 0 & 0 & 0 & 0 & 0 \\ 0 & 0 & 0 & 1 & 1 & 0 & 1 & 0 & 0 & 0 & 0 & 0 & 0 \\ 0 & 0 & 1 & 1 & 0 & 1 & 0 & 0 & 0 & 0 & 0 & 0 & 0 \\ 0 & 1 & 1 & 0 & 1 & 0 & 0 & 0 & 0 & 0 & 0 & 0 & 0 \end{array}\right)$
\end{center}
The upper-right, and bottom-left, $7 \times 7$ submatrices of the above $13 \times 13$ symmetric matrix are the symmetric version of the cocircuit matrix of the Fano matroid. This $13 \times 13$ matrix has symmetric tropical rank three. If it had symmetric Kapranov rank three then its upper-right $7 \times 7$ submatrix would have standard Kapranov rank three, and this is impossible.

In \cite{sh1} the matrix
\begin{center}
  $\left(\begin{array}{cccccc} 0 & 0 & 4 & 4 & 4 & 4 \\ 0 & 0 & 2 & 4 & 1 & 4 \\ 4 & 4 & 0 & 0 & 4 & 4 \\ 2 & 4 & 0 & 0 & 2 & 4 \\ 4 & 4 & 4 & 4 & 0 & 0 \\ 2 & 4 & 1 & 4 & 0 & 0 \end{array}\right)$,
\end{center}
was shown to have tropical rank four but Kapranov rank five. If we permute the rows of this matrix with the permutation $(135)(246)$, and the columns with the permutation $(16)(25)(34)$, we get the symmetric matrix
\begin{center}
  $\left(\begin{array}{cccccc} 0 & 0 & 2 & 4 & 1 & 4 \\ 0 & 0 & 4 & 4 & 4 & 4 \\ 2 & 4 & 2 & 4 & 0 & 0 \\ 4 & 4 & 4 & 4 & 0 & 0 \\ 1 & 4 & 0 & 0 & 2 & 4 \\ 4 & 4 & 0 & 0 & 4 & 4 \end{array}\right)$.
\end{center}
This symmetric $6 \times 6$ matrix has symmetric tropical rank four, and, as its Kapranov rank is five, its symmetric Kapranov rank is at least five. Applying Theorem 3 we see its symmetric Kapranov rank is exactly five. So, it is a $6 \times 6$ symmetric matrix with different symmetric tropical and symmetric Kapranov ranks.

\subsection{Dimension growth of determinantal prevarieties}

If a basis for an ideal is not a tropical basis, a natural question to ask is whether the corresponding tropical prevariety has greater dimension than the corresponding tropical variety. In the context of determinantal varieties, this question is whether when the containment
\begin{center}
  $\tilde{T}_{m,n,r} \subseteq T_{m,n,r}$,
\end{center}
is proper, the inequality
\begin{center}
  $dim(\tilde{T}_{m,n,r}) \leq dim(T_{m,n,r})$,
\end{center}
is strict. For symmetric matrices we can ask the analogous question, namely, whether when the containment
\begin{center}
  $\tilde{S}_{n,r} \subseteq S_{n,r}$
\end{center}
is proper the inequality
\begin{center}
  $dim(\tilde{S}_{n,r}) \leq dim(S_{n,r})$
\end{center}
is strict.

In this section we prove the answer for standard matrices is yes, and for symmetric matrices the answer is yes for all cases outside rank three. Note that this answer in the case of standard matrices seems to be known to the mathematical community \cite{c}, but I am unaware of a source for a proof outside this paper.

The proofs for the standard and the symmetric cases are similar, and so will be given in parallel. The proofs are inductive, and will rely upon applying preliminary lemmas to specific base cases. We first prove these lemmas, then examine the base cases, and finally prove the main theorems. We begin, in this subsection, with the lemmas.

Our first lemma concerns tropical linear combinations of tropically linearly independent columns, and could be viewed as a corollary of Theorem 4.2 from \cite{dss}.

\newtheorem{lemma}{Lemma}
\begin{lemma}
  If $A$ is an $r \times r$ tropically nonsingular matrix and the permutation $\sigma \in S_{r}$ realizes the tropical determinant, then there exist constants $c_{1},\ldots,c_{r}$ such that
  \begin{center}
    $c_{\sigma(i)} \odot a_{i,\sigma(i)} \leq c_{j} \odot a_{i,j}$;
  \end{center}
  for all $i,j \leq r$, with equality if and only if $\sigma(i) = j$.
\end{lemma}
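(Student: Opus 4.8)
The plan is to interpret the inequalities as saying that a certain tropical linear system has a solution, and to produce the solution explicitly. Write $b_{i,j} := a_{i,j}$ and, after relabeling columns via $\sigma$, assume without loss of generality that $\sigma$ is the identity, so that $a_{i,i} \odot a_{j,j} \le a_{i,j} \odot a_{j,i}$ for all $i \ne j$ with equality never holding simultaneously in both orientations (this is exactly tropical nonsingularity restricted to the $2\times 2$ minors, which follows from $\sigma = \mathrm{id}$ being the unique minimizing permutation). What we must find is $c_1,\dots,c_r$ with $c_i + a_{i,i} \le c_j + a_{i,j}$ for all $i,j$, strictly unless $i = j$. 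Equivalently, setting $d_j := c_j$, we need $d_j - d_i \ge a_{i,i} - a_{i,j}$ for all $i \ne j$, i.e. a feasible potential function on the complete directed graph on $\{1,\dots,r\}$ with edge weights $w(i \to j) := a_{i,i} - a_{i,j}$.

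First I would recall the standard fact that such a potential exists if and only if every directed cycle has nonpositive total weight, and that one can then take $d_j = -(\text{shortest path distance from a fixed root to } j)$, which automatically gives strict inequality on all non-loop edges once no zero-weight cycles exist. So the crux is the cycle condition: for any cyclic sequence $i_1 \to i_2 \to \cdots \to i_k \to i_1$ of distinct indices, I need
\begin{center}
$\displaystyle \sum_{\ell=1}^{k}\bigl(a_{i_\ell,i_\ell} - a_{i_\ell,i_{\ell+1}}\bigr) \le 0$,
\end{center}
with strict inequality, which rearranges to $\sum_\ell a_{i_\ell,i_\ell} \le \sum_\ell a_{i_\ell,i_{\ell+1}}$. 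But the left-hand side is the diagonal term of the $k\times k$ submatrix on rows/columns $\{i_1,\dots,i_k\}$ realizing (a piece of) the identity permutation, and the right-hand side is the term for the $k$-cycle permutation on that submatrix; since the identity is the unique minimizer of the full tropical determinant of $A$, its restriction to this submatrix is still a strict minimizer among permutations that agree with it off $\{i_1,\dots,i_k\}$ — in particular strictly smaller than the contribution of the $k$-cycle. That gives the strict cycle inequality, hence a valid potential, hence the constants $c_i$.

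The step I expect to be the main obstacle is the passage from "the identity uniquely realizes $\mathrm{tropdet}(A)$" to "on every index subset the identity strictly beats the single $k$-cycle": one must check that replacing the identity's action on $\{i_1,\dots,i_k\}$ by the $k$-cycle, while keeping it fixed elsewhere, yields a genuine permutation of $\{1,\dots,r\}$ (it does, being a product of disjoint pieces) and that strictness is preserved — a degenerate tie would contradict tropical nonsingularity. I would also double-check the edge case where the cycle has length $2$, which is precisely the $2\times 2$ minor condition noted at the outset, and confirm that shortest-path potentials in a graph with all cycles of strictly negative weight indeed satisfy the triangle-type inequalities strictly, so that equality in $c_{\sigma(i)} \odot a_{i,\sigma(i)} \le c_j \odot a_{i,j}$ holds exactly when $\sigma(i) = j$. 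Finally I would translate back through the relabeling by $\sigma$ to recover the statement as written.
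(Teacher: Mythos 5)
Your argument is essentially correct but follows a genuinely different route from the paper's. The paper deduces the existence of the $c_i$ from Theorem 4.2 of Develin--Santos--Sturmfels: tropical nonsingularity forces the tropical convex hull of the columns to be full-dimensional, and choosing $c_1,\ldots,c_r$ so that $c_1\odot\textbf{a}_1\oplus\cdots\oplus c_r\odot\textbf{a}_r$ lies in the interior of that hull forces each column to be the unique minimizer in some row. Your proof replaces this tropical-convexity input with an elementary feasibility argument for a difference-constraint system on the complete digraph, verified by the cycle criterion; the key observation that a directed cycle $i_1\to\cdots\to i_k\to i_1$ corresponds to comparing $\sigma$ with $\sigma$ composed with that $k$-cycle (a genuine permutation, distinct from $\sigma$, hence strictly worse by nonsingularity) is correct and is exactly the right way to import the hypothesis. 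Your approach is self-contained and arguably more transparent; the paper's is shorter but leans on an external structural theorem.

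There is one step you flagged that does in fact need repair: the claim that the shortest-path potential ``automatically gives strict inequality on all non-loop edges once no zero-weight cycles exist'' is false. Tightness of an edge in a shortest-path potential is governed by whether that edge lies on a shortest path from the root, not by the existence of zero-weight cycles, and the shortest-path tree always contains tight edges; so the unperturbed potential only yields the weak inequalities $d_j-d_i\geq w(i\to j)$. The fix is immediate from what you already established: since every directed cycle has strictly negative $w$-weight and there are only finitely many cycles (each of length at most $r$), choose $\epsilon>0$ with $\epsilon\leq\min_{\mathrm{cycles}}\bigl(-\sum w\bigr)/r$ and run the same potential argument with weights $w(i\to j)+\epsilon$; all cycles remain nonpositive, so a potential exists, and it satisfies $d_j-d_i\geq w(i\to j)+\epsilon>w(i\to j)$ strictly for all $i\neq j$. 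With that emendation, and the routine relabeling by $\sigma$ at the end, your proof is complete.
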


\begin{proof}
  Denote the columns of $A$ by $\textbf{a}_{1},\ldots,\textbf{a}_{r}$. As the tropical rank of $A$ is $r$, by Theorem 4.2 of \cite{dss} the dimension of the tropical convex hull of the columns of $A$ is $r$.\footnote{Note that we view the tropical convex hull as a subset of $\mathbb{R}^{r}$, and not of $\mathbb{TP}^{r-1}$, which is the reason the dimension is $r$ here and not $r-1$.} In particular, if we choose $c_{1},\ldots,c_{r}$ such that
  \begin{center}
    $c_{1} \odot \textbf{a}_{1} \oplus c_{2} \odot \textbf{a}_{2} \oplus \cdots \oplus c_{r} \odot \textbf{a}_{r}$
  \end{center}
  is in the interior of the tropical convex hull, then any small modification of a coefficient $c_{i}$ must change the corresponding point in the convex hull. This requires that there exists a permutation $\rho \in S_{r}$ such that $c_{i} + a_{\rho(i),i} \leq c_{k} + a_{\rho(i),k}$ for all $k \leq r$, with equality if and only if $i = k$. The sum of these $a_{\rho(i),i}$ terms must be the determinant, and our lemma is proved with $\sigma = \rho^{-1}$.
\end{proof}

We now present, in both the standard and symmetric cases, how given a matrix $A$ with tropical or symmetric tropical rank $r$, we can construct larger matrices from $A$ with desired tropical or symmetric tropical ranks. We begin with the standard case.

\subsection{The standard case}

\begin{lemma}
  Suppose $A$ is an $m \times n$ matrix with tropical rank $r$. Construct the $m \times (n+1)$ matrix $A'$ from $A$ by appending to $A$ a column formed as a tropical linear combination of columns from $A$. The matrix $A'$ has tropical rank $r$. If we construct the $(m+1) \times n$ matrix $A''$ from $A$ by appending to $A$ a row formed as a tropical linear combination of rows from $A$, then the matrix $A''$ has tropical rank $r$ as well.
\end{lemma}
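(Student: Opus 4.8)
The plan is to show both halves at once by the obvious symmetry (transpose), so it suffices to handle the column case: appending a tropical linear combination of existing columns cannot raise the tropical rank, and cannot lower it either. That the rank does not drop is immediate — $A$ is a submatrix of $A'$, and a tropically nonsingular $r\times r$ submatrix of $A$ is still a tropically nonsingular $r\times r$ submatrix of $A'$, so the tropical rank of $A'$ is at least $r$. The content is the upper bound: every $(r+1)\times(r+1)$ submatrix of $A'$ is tropically singular. (Recall tropical rank $r$ means every $(r+1)\times(r+1)$ minor is tropically singular, equivalently there is no tropically nonsingular submatrix of size $r+1$.)

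So suppose for contradiction that $A'$ has a tropically nonsingular $(r+1)\times(r+1)$ submatrix $B$. If $B$ does not use the new column $n+1$, then $B$ is a submatrix of $A$, contradicting that $A$ has tropical rank $r$. Hence $B$ uses column $n+1$. Write the new column of $A$ as $\mathbf{a}_{n+1} = \bigoplus_{k} c_k \odot \mathbf{a}_k$ for some index set and constants $c_k$; restricting to the $r+1$ rows of $B$, the corresponding entry of $B$ in the new column, in row $i$, equals $\min_k (c_k + a_{i,j_k})$ where the $j_k$ range over the columns appearing in the combination. The key step is to pick, for the permutation $\sigma$ realizing $\operatorname{tropdet}(B)$, the row $i_0$ matched to the new column, note that the minimum defining that entry is attained by some particular $c_{k_0} + a_{i_0, j_{k_0}}$, and then compare with the submatrix $B^{\sharp}$ obtained from $B$ by replacing the new column by the old column $j_{k_0}$ (shifted by the constant $c_{k_0}$, which does not affect singularity). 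I expect one of two things to happen: either $j_{k_0}$ is already a column of $B$, in which case $B^\sharp$ has two equal columns up to a shift and is visibly tropically singular, giving two realizing permutations for $B$ by the same swap — contradiction; or $j_{k_0}$ is not a column of $B$, in which case $B^\sharp$ is a genuine $(r+1)\times(r+1)$ submatrix of $A$, which is tropically singular (tropical rank of $A$ is $r$), and I must argue that the tropical singularity of $B^\sharp$ forces the tropical singularity of $B$.

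The main obstacle is exactly that last implication: transferring a second realizing permutation of $B^\sharp$ back to $B$. This is where Lemma~1 does the work. Applying Lemma~1 to... wait — $B^\sharp$ need not be nonsingular, so instead I would argue more carefully by a minimality/exchange argument: among all tropically nonsingular $(r+1)\times(r+1)$ submatrices of $A'$, choose one, $B$, using the new column and using as few of the "combination columns" $j_k$ outside $B$ as possible (or minimizing in some order); then using Lemma~1 applied to $B$ itself (which is nonsingular by assumption) to get constants $d_1,\dots,d_{r+1}$ witnessing that the $\sigma$-diagonal strictly beats every off-diagonal entry, and in particular the new-column entry in row $i_0$ strictly beats $d_{j} + a_{i_0,j}$ for the other columns $j$ of $B$. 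Combining the strict inequalities from Lemma~1 with the expression of the new column as a min of the $c_k + a_{i,j_k}$, I would derive that the column $j_{k_0}$ can be exchanged into $B$ (replacing the new column) while preserving tropical nonsingularity — because the realizing permutation survives and no new tie is created, the strictness of Lemma~1's inequalities providing the needed slack. That produces a tropically nonsingular $(r+1)\times(r+1)$ submatrix of $A$, the desired contradiction. I would then remark that the row statement follows by transposing, and that the same argument, carried out verbatim but never needing to break the symmetry $X_{i,j}=X_{j,i}$, will be reused in the symmetric case treated in the next subsection.
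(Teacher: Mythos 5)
Your route is genuinely different from the paper's. The paper disposes of this lemma in two sentences: by Theorem 4.2 of Develin--Santos--Sturmfels, the tropical rank of a matrix equals the dimension of the tropical convex hull of its columns, and appending a tropical linear combination of existing columns does not change that convex hull; the row case is the transposed statement. Your approach instead works directly with $(r+1)\times(r+1)$ minors. That approach is viable --- in fact it is essentially the argument the paper is forced to carry out at length for the symmetric analogue (Lemma 4), where no convex-hull theorem is available --- but as written it has a gap exactly where you flag one, and the repair you propose is not the right one.

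The crux is showing that tropical singularity of $B^{\sharp}$ (the matrix obtained from $B$ by replacing the appended column with $c_{k_0}\odot(\text{column } j_{k_0})$) forces tropical singularity of $B$. Your proposed fix --- choosing $B$ extremal among nonsingular $(r+1)\times(r+1)$ submatrices and invoking Lemma 1 to get slack so that ``no new tie is created'' --- is vague at the decisive moment: Lemma 1 controls comparisons between a realizing diagonal and off-diagonal entries after column rescaling, and it is not clear how its strict inequalities prevent a second realizing permutation of $B^{\sharp}$ from arising, nor what the minimality of $B$ buys you. The clean argument is a monotonicity comparison and needs neither device. Every entry of the appended column of $B$ is a minimum of terms one of which is the corresponding entry of $B^{\sharp}$, so $B\leq B^{\sharp}$ entrywise, with equality in the row $i_0$ that $\sigma$ matches to the new column. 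Hence $\mathrm{tropdet}(B)\leq\mathrm{tropdet}(B^{\sharp})\leq(\sigma\text{-product in }B^{\sharp})=(\sigma\text{-product in }B)=\mathrm{tropdet}(B)$, so the two tropical determinants are equal; and since $B\leq B^{\sharp}$ entrywise, any permutation realizing $\mathrm{tropdet}(B^{\sharp})$ gives a product in $B$ that is at most $\mathrm{tropdet}(B)$, hence realizes it. Thus two distinct realizing permutations of $B^{\sharp}$ (which exist in both of your cases: proportional columns if $j_{k_0}$ is already a column of $B$, or singularity of an $(r+1)\times(r+1)$ submatrix of $A$ otherwise) yield two for $B$, the desired contradiction. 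With that substitution your proof is correct; compare it with the paper's proof of Lemma 4, which runs exactly this comparison.
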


\begin{proof}
  As column $n+1$ of $A'$ is a tropical linear combination of the columns of $A$, the tropical convex hull of the columns of $A'$ is the same as the tropical convex hull of the columns of $A$, and therefore by Theorem 4.2 from \cite{dss} the two matrices have the same tropical rank.
  An identical argument, mutatis mutandis, proves $A''$ has tropical rank $r$.
\end{proof}

\begin{lemma}
  Suppose $A$ is an $m \times n$ matrix with tropical rank $r$. Construct the $(m+1) \times (n+1)$ matrix $A'$ from $A$ by choosing a number $P$ that is greater than any entry of $A$, a number $M$ that is less than any entry of $A$, and defining
  \begin{center}
    $A' = \left(\begin{array}{ccc|c} & & & P \\ & A & & \vdots \\ & & & P \\ \hline P & \cdots & P & M \end{array}\right)$.
  \end{center}
  The matrix $A'$ has tropical rank $r+1$.
\end{lemma}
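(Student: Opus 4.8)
The plan is to prove separately that the tropical rank of $A'$ is at least $r+1$ and at most $r+1$, each time reducing to computing tropical determinants of square submatrices of $A'$, and using that the tropical rank of a matrix is the largest size of a tropically nonsingular square submatrix. Everything rests on one computation. Call a square submatrix $C$ of $A'$ \emph{bordered} if it contains both the last row and the last column of $A'$; deleting that row and column from $C$ leaves an interior block $B$ which is a submatrix of $A$, the corner entry of $C$ equals $M$, and the remaining entries of the last row and last column of $C$ all equal $P$. I claim $tropdet(C) = M + tropdet(B)$, realized by exactly those bijections that send the last row to the last column and restrict to a minimizing monomial of $B$. Indeed, a bijection not doing this passes through two distinct border entries $P$, so its weight is $2P$ plus the weight $w$ of a bijection between all but one row and all but one column of $B$; adjoining to the latter the single entry of $B$ lying in the omitted row and column extends it to a bijection of all of $B$, of weight at least $tropdet(B)$, so $w \geq tropdet(B) - P$ because an entry of $A$ is less than $P$. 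Hence such a bijection of $C$ has weight $> 2P + tropdet(B) - P = P + tropdet(B) > M + tropdet(B)$, since $M < P$; so the last-row-to-last-column bijections strictly win, proving the claim. Note that only the qualitative hypotheses $M < (\text{entries of }A) < P$ are used, not any quantitative largeness.

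For ``tropical rank at least $r+1$'': since $A$ has tropical rank $r$ it has a tropically nonsingular $r \times r$ submatrix $B_0$; let $B'$ be the bordered $(r+1)\times(r+1)$ submatrix of $A'$ on the rows and columns of $B_0$ together with the last row and last column of $A'$. By the claim $tropdet(B') = M + tropdet(B_0)$ is realized only by the extension of the unique minimizing monomial of $B_0$, so $B'$ is tropically nonsingular.

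For ``tropical rank at most $r+1$'' I would show that every $(r+k)\times(r+k)$ submatrix $C$ of $A'$ with $k \geq 2$ is tropically singular (if $r+2 > \min(m+1,n+1)$ there is nothing to prove). If $C$ meets neither the last row nor the last column of $A'$, it is an $(r+k)\times(r+k)$ submatrix of $A$, tropically singular because $r+k > r$. If $C$ meets exactly one, say the last row (the last-column case being the transpose), then $C$ has a constant row of $P$'s, so each monomial of $C$ is $P$ times a monomial of the $(r+k-1)\times(r+k-1)$ submatrix of $A$ on the $A$-rows of $C$ and all but one column; for the column choice underlying a minimizing monomial of $C$, that submatrix has size exceeding $r$, hence is tropically singular, hence has at least two minimizing monomials, and these lift to at least two minimizing monomials of $C$. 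If $C$ meets both, it is bordered with interior $B$ an $(r+k-1)\times(r+k-1)$ submatrix of $A$, hence tropically singular; by the claim $tropdet(C) = M + tropdet(B)$ is realized by the extensions of the (at least two) minimizing monomials of $B$. Combining the two bounds, the tropical rank of $A'$ is exactly $r+1$.

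The main obstacle is the combinatorial claim of the first paragraph: one must guarantee that a minimizing monomial of a bordered matrix never uses two of the border entries $P$, and this must follow from the qualitative hypotheses alone rather than from $P$ being ``sufficiently'' large or $M$ ``sufficiently'' small. The trick of extending a partial bijection of $B$ by its one missing entry is exactly what makes the weak hypotheses suffice; once it is in hand, the lower bound and all three cases of the upper bound go through uniformly, with no appeal to Lemma 1 or to Theorem 4.2 of \cite{dss}.
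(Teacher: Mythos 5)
Your proof is correct and follows essentially the same route as the paper: the lower bound by bordering a tropically nonsingular $r \times r$ submatrix of $A$ with the new row and column, and the upper bound by the same case analysis on whether a large submatrix meets the last row and/or last column. The one difference is that your opening claim carefully justifies, from the qualitative hypotheses $M < $ entries of $A < P$ alone, the assertion the paper makes only in passing (``given the definitions of $P$ and $M$, every minimizing product must involve the corner entry $M$''), which is a worthwhile detail to have written out.
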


\begin{proof}
  As $A$ has tropical rank $r$ there is an $r \times r$ submatrix of $A$ that is tropically nonsingular. Let $a_{1},\ldots,a_{r}$ denote the rows of $A$ that define this submatrix, $b_{1},\ldots,b_{r}$ denote the columns of $A$ that define this submatrix, and $D$ denote the submatrix's tropical determinant. The tropical determinant of the $(r+1) \times (r+1)$ submatrix of $A'$ defined by the rows $a_{1},\ldots,a_{r},a_{m+1}$, and the columns $b_{1},\ldots,b_{r},b_{n+1}$ must, given the definitions of $P$ and $M$, be equal to $D \odot M$, and the submatrix must be nonsingular. So, the tropical rank of $A'$ must be at least $r+1$.
  
  Take any $(r+2) \times (r+2)$ submatrix of $A'$. If it is a submatrix of $A$ then, as $A$ has tropical rank $r$, it must be singular. If the submatrix is formed from row $m+1$ of $A'$, but not column $n+1$, then we can see it must be tropically singular by taking a row expansion along the submatrix's bottom row, and noting that every $(r+1) \times (r+1)$ submatrix of $A$ is tropically singular. Similarly, if the submatrix is formed from column $n+1$ of $A'$, but not row $m+1$, the submatrix must be tropically singular. Finally, if the submatrix is formed from row $m+1$ and column $n+1$ then, given the definitions of $P$ and $M$, every tropical product of terms that equals the tropical determinant must involve the term $a_{m+1,n+1} = M$, and singularity of the $(r+2) \times (r+2)$ submatrix follows from the fact that every $(r+1) \times (r+1)$ submatrix of $A$ is tropically singular. So, the tropical rank of $A'$ is at most $r+1$, and combining this with the result from the previous paragraph we see the tropical rank equals $r+1$.
\end{proof}

\subsection{The symmetric case}

The corresponding lemmas for symmetric matrices are similar. However, for the symmetric version of Lemma 2 we do not have a corresponding convenient reference like Theorem 4.2 from \cite{dss}, and consequently the proof is much longer and more involved.

\begin{lemma}
  Suppose $A$ is an $n \times n$ symmetric matrix with symmetric tropical rank $r$. Construct the $n \times (n+1)$ matrix $A'$ from $A$ by appending to the right of $A$ a column formed as a tropical linear combination of columns from $A$. So, if $\textbf{a}_{1},\ldots,\textbf{a}_{n}$ are the columns of $A$ and $\textbf{a}_{n+1}'$ is column $n+1$ of $A'$, then
  \begin{center}
    $\textbf{a}_{n+1}' = c_{i_{1}} \odot \textbf{a}_{i_{1}} \oplus c_{i_{2}} \odot \textbf{a}_{i_{2}} \oplus \cdots \oplus c_{i_{k}} \odot \textbf{a}_{i_{k}}$.
  \end{center}
  Construct the $(n+1) \times (n+1)$ matrix $A''$ from $A'$ by appending to the bottom of $A'$ a row formed as a tropical linear combination of rows from $A'$ in the same manner. So, if $\textbf{a}_{1}',\ldots,\textbf{a}_{n+1}'$ are the rows of $A'$ and $\textbf{a}_{n+1}''$ is column $n+1$ of $A''$, then
  \begin{center}
    $\textbf{a}_{n+1}'' = c_{i_{1}} \cdot \textbf{a}_{i_{1}}' \oplus c_{i_{2}} \cdot \textbf{a}_{i_{2}}' \oplus \cdots \oplus c_{i_{k}} \odot \textbf{a}_{i_{k}}'$.
  \end{center}
  The matrix $A''$ is symmetric, and has symmetric tropical rank $r$.
\end{lemma}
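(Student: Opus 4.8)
The plan is to write $A''$ down explicitly, dispose of the trivial half, and then prove the substantive half — that $A''$ has no symmetrically tropically nonsingular submatrix of size $>r$ — by a case analysis on how a candidate witnessing submatrix meets the new (last) row and column. Writing $\textbf{a}=\textbf{a}_{n+1}'$ for the appended column, so $\textbf{a}_{j}=\bigoplus_{\ell=1}^{k}c_{i_\ell}\odot a_{j,i_\ell}$, and setting $d=\bigoplus_{\ell,m}c_{i_\ell}\odot c_{i_m}\odot a_{i_\ell,i_m}$, a direct check gives $A''=\left(\begin{smallmatrix}A&\textbf{a}\\\textbf{a}^{\mathrm{T}}&d\end{smallmatrix}\right)$: symmetry of the off-diagonal blocks follows from $a_{j,i_\ell}=a_{i_\ell,j}$ together with the new row and new column using the same index set $\{i_1,\dots,i_k\}$ and coefficients, and the corner evaluates to $d$ from either side. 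I would also note that the last row and the last column of $A''$ are themselves tropical linear combinations, with the same coefficients $c_{i_\ell}$, of, respectively, rows $i_1,\dots,i_k$ and columns $i_1,\dots,i_k$ of $A''$. Since $A$ is the top-left $n\times n$ submatrix of $A''$ and $A$ has symmetric tropical rank $r$, it contains a symmetrically tropically nonsingular $r\times r$ submatrix, which is also a submatrix of $A''$; so the symmetric tropical rank of $A''$ is at least $r$, and it remains only to show that every square submatrix $B$ of $A''$ of size $s>r$ is symmetrically tropically singular.

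I would split on whether $B$ uses row $n+1$ and whether it uses column $n+1$. If it uses neither, $B$ is a submatrix of $A$ of size $>r$ and so is symmetrically tropically singular. Suppose $B$ uses the new row but not the new column. Choose a bijection $\pi$ realizing $tropdet(B)$, put $c^{\ast}=\pi(n+1)$, and pick $\ell^{\ast}\in\{i_1,\dots,i_k\}$ realizing the minimum that defines the entry $B_{n+1,c^{\ast}}$. Form $\hat B$ from $B$ by replacing its row $n+1$ with the constant-shifted genuine row $c_{\ell^{\ast}}\odot(\text{row }\ell^{\ast}\text{ of }A)$, restricted to the columns of $B$. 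Because $\ell^{\ast}$ occurs among the summands of every entry of $B$'s new row, $\hat B\ge B$ entrywise with equality in position $(n+1,c^{\ast})$, whence $tropdet(\hat B)=tropdet(B)$ and every minimizing bijection of $\hat B$ is also a minimizing bijection of $B$ with the \emph{same} monomial (the monomial depends only on the shared pattern of row and column indices). Now $\hat B$ is transparent: if $\ell^{\ast}$ is not already a row index of $B$, then up to a constant row shift and renaming the index of $B$'s last row, $\hat B$ is a square submatrix of $A$ of size $>r$, hence symmetrically tropically singular; if $\ell^{\ast}$ is already a row index of $B$, then $\hat B$ has two rows one of which is a constant shift of the other, and transposing their $\hat B$-images yields a second minimizing monomial. (Along the way one checks that the renaming is injective on monomials and compatible with $X_{i,j}=X_{j,i}$.) The case that $B$ uses the new column but not the new row is the transpose of this, and transposing a submatrix of the symmetric matrix $A''$ preserves symmetric tropical singularity.

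The remaining case, where $B=A''[R_0\cup\{n+1\},C_0\cup\{n+1\}]$ uses both, is the heart of the proof. Fix a bijection $\pi^{\ast}$ realizing $tropdet(B)$. If $\pi^{\ast}$ uses the corner $d$, I would carry out the analogous modification on the new \emph{column}, but with a representative index $\ell_0$ drawn from a \emph{pair} $(\ell_0,m_0)$ realizing $d=\bigoplus_{\ell,m}c_{i_\ell}\odot c_{i_m}\odot a_{i_\ell,i_m}$; minimality of that pair forces $c_{\ell_0}\odot\bigoplus_{m}c_{i_m}\odot a_{\ell_0,i_m}=d$, which is exactly what makes the replacement column agree with $B$'s last column in its $(n+1)$-st coordinate, so that $tropdet$ is again preserved. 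After renaming the index of the last column, the modified matrix either becomes a submatrix of $A''$ using the new row but not the new column — already handled — or, when $\ell_0\in C_0$, has two columns one of which is a constant shift of the other. If instead $\pi^{\ast}$ avoids the corner, it sends some $i'\in R_0$ to column $n+1$ and sends $n+1$ to some $c'\in C_0$; running the new-row modification with $\ell^{\ast}$ realizing $B_{n+1,c'}$ then lands either in the already-handled "new column but not new row" case or again in a pair of rows differing by a constant. In each branch $B$ acquires a second minimizing monomial, so $B$ is symmetrically tropically singular, and the symmetric tropical rank of $A''$ is exactly $r$.

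The step I expect to be the main obstacle is this last case, and within it two things: choosing the representative index of the tropical combination correctly — drawn from a realizing pair of the corner $d$ rather than from any single summand — so that an inequality of tropical determinants becomes an equality; and, more tediously, verifying at each $tropdet$-preserving replacement that the \emph{number} of distinct minimizing monomials is preserved modulo $X_{i,j}=X_{j,i}$, especially across the renamings of the last-row and last-column indices. This is exactly the bookkeeping that, in the non-symmetric Lemma 2, Theorem 4.2 of \cite{dss} performs for free through tropical convexity.
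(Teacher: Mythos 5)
Your proposal is correct and follows essentially the same strategy as the paper's proof: replace the appended row or column of a candidate submatrix by the specific term $c_{\ell^{\ast}}\odot(\text{row or column }\ell^{\ast})$ that realizes the entry selected by a minimizing bijection, observe that the tropical determinant is unchanged, and then either reduce to a genuine oversized submatrix of $A$ (or of the already-handled one-new-line case) or to a matrix with two tropically proportional rows or columns. The only cosmetic differences are that the paper treats the corner entry uniformly, using that the entire last column of $A''$ (corner included) is a tropical combination of full columns of $A''$, rather than via your "realizing pair" argument, and it outsources the distinctness-of-monomials bookkeeping under $X_{i,j}=X_{j,i}$ to a proposition from \cite{z1}, the same point you flag as the residual verification.
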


\begin{proof}
  The entry $a_{j,n+1}''$ of $A''$, where $j < n+1$, is a tropical linear combination of elements from row $j$ of $A$:
  \begin{center}
    $a_{j,n+1}'' = c_{k_{1}} \odot a_{j,k_{1}} \oplus c_{k_{2}} \odot a_{j,k_{2}} \oplus \cdots \oplus c_{k_{l}} \odot a_{j,k_{l}}$.
  \end{center}

  The entry $a_{n+1,j}''$ is similarly a tropical linear combination of elements from column $j$ of $A$:
  \begin{center}
    $a_{n+1,j}'' = c_{k_{1}} \odot a_{k_{1},j} \oplus c_{k_{2}} \odot a_{k_{2},j} \oplus \cdots \oplus c_{k_{l}} \odot a_{k_{l},j}$.
  \end{center}
  As $A$ is symmetric we see immediately that $a_{j,n+1}'' = a_{n+1,j}''$, and therefore $A''$ is also symmetric.
  
  Suppose $M$ is an $(r+1) \times (r+1)$ submatrix of $A''$ that inherits its row and column indices from $A''$. Denote the row indices of $M$ in ascending order by $i_{1},i_{2},\ldots,i_{r+1}$, and the column indices in ascending order by $j_{1},j_{2},\ldots,j_{r+1}$. Denote by $\textbf{m}_{j}$ the column vector formed by rows $i_{1},\ldots,i_{r+1}$ and column $j$ of $A''$. So,
  \begin{center}
    $M = \left(\begin{array}{cccc} & & & \\ \textbf{m}_{j_{1}} & \textbf{m}_{j_{2}} & \cdots & \textbf{m}_{j_{r+1}} \\ & & & \end{array}\right)$.
  \end{center}
  If $M$ does not have a column $n+1$ or row $n+1$ then $M$ corresponds with a submatrix of $A$. In this case as $A$ has symmetric tropical rank $r$, $M$ must be symmetrically tropically singular.
  
  Suppose $M$ has a column $n+1$, but no row $n+1$. There exists a bijection $\sigma$ from the column indices of $M$ to its row indices such that
  \begin{center}
    $tropdet(M) = m_{\sigma(j_{1}),j_{1}} \odot m_{\sigma(j_{2}),j_{2}} \odot \cdots \odot m_{\sigma(j_{r}),j_{r}} \odot m_{\sigma(n+1),n+1}$

    $= a_{\sigma(j_{1}),j_{1}}'' \odot a_{\sigma(j_{2}),j_{2}}'' \odot \cdots \odot a_{\sigma(j_{r}),j_{r}}'' \odot a_{\sigma(n+1),n+1}''$.
  \end{center}
  Note that this bijection $\sigma$ is not necessarily unique.
  
  We know from the construction of $A''$ that $a_{\sigma(n+1),n+1}'' = c_{k_{i}} \odot a_{\sigma(n+1),k_{i}}''$ for some index $k_{i} < n+1$. Using this information, define the matrix
  \begin{center}
    $M' = \left(\begin{array}{ccccc} & & & & \\ \textbf{m}_{j_{1}} & \textbf{m}_{j_{2}} & \cdots & \textbf{m}_{j_{r}} & c_{k_{i}} \odot \textbf{m}_{k_{i}} \\ & & & & \end{array}\right)$.
  \end{center}
  Index the rows and columns of $M'$ with the same indices as $M$. The matrices $M$ and $M'$ differ only in their rightmost column, and $m_{i,n+1} \leq m_{i,n+1}'$ for all entries in their respective rightmost columns. Therefore, $tropdet(M) \leq tropdet(M')$, and it follows immediately that
  \begin{center}
    $tropdet(M) = m_{\sigma(j_{1}),j_{1}} \odot m_{\sigma(j_{2}),j_{2}} \odot \cdots \odot m_{\sigma(j_{r}),j_{r}} \odot m_{\sigma(n+1),n+1}$
    
    $= a_{\sigma(j_{1}),j_{1}}'' \odot a_{\sigma(j_{2}),j_{2}}'' \odot \cdots \odot a_{\sigma(j_{r}),j_{r}}'' \odot a_{\sigma(n+1),n+1}''$
    
    $= m_{\sigma(j_{1}),j_{1}}' \odot m_{\sigma(j_{2}),j_{2}}' \odot \cdots \odot m_{\sigma(j_{r}),j_{r}}' \odot m_{\sigma(n+1),n+1}' = tropdet(M')$.
  \end{center}
  So, $tropdet(M) = tropdet(M')$, and if $\tau$ is a bijection from $\{j_{1},\ldots,j_{r+1}\}$ to $\{i_{1},\ldots,i_{r+1}\}$ such that
  \begin{center}
    $tropdet(M') = m_{\tau(j_{1}),j_{1}}' \odot m_{\tau(j_{2}),j_{2}}' \odot \cdots \odot m_{\tau(j_{r}),j_{r}}' \odot m_{\tau(j_{r+1}),j_{r+1}}'$,
  \end{center}
  then
  \begin{center}
    $tropdet(M) = m_{\tau(j_{1}),j_{1}} \odot m_{\tau(j_{2}),j_{2}} \odot \cdots \odot m_{\tau(j_{r}),j_{r}} \odot m_{\tau(j_{r+1}),j_{r+1}}$.
  \end{center}
  
  Suppose $k_{i} \in \{j_{1},\ldots,j_{r}\}$. In this case one of the columns of $M'$ is a tropical multiple of another, and by Proposition 2.9 there are two distinct bijections $\sigma_{1}$ and $\sigma_{2}$ such that 
  \begin{center}
    $tropdet(M') = m_{\sigma_{1}(j_{1}),j_{1}}' \odot \cdots \odot m_{\sigma_{1}(j_{r}),j_{r}}' = m_{\sigma_{2}(j_{1}),j_{1}}' \odot \cdots \odot m_{\sigma_{2}(j_{r+1}),j_{r+1}}'$
  \end{center}
  and the monomials
  \begin{center}
    $X_{1} = X_{\sigma_{1}(j_{1}),j_{1}} \odot \cdots \odot X_{\sigma_{1}(j_{r+1}),j_{r+1}}$, \hspace{.1 in} and \hspace{.1 in} $X_{2} = X_{\sigma_{2}(j_{1}),j_{1}} \odot \cdots \odot X_{\sigma_{2}(j_{r+1}),j_{r+1}}$
  \end{center}
  are distinct even given the relation $X_{i,j} = X_{j,i}$. The monomials $X_{1}$ and $X_{2}$ must both be minimizing monomials for the submatrix $M$, and therefore this submatrix is symmetrically tropically singular.
  
  If $k_{i} \notin \{j_{1},\ldots,j_{r}\}$ then suppose $j_{q} < k_{i} < j_{q+1}$. Take the submatrix of $A''$ given by
  \begin{center}
    $M'' = \left(\begin{array}{cccccccc} & & & & & & & \\ \textbf{m}_{j_{1}} & \textbf{m}_{j_{2}} & \cdots & \textbf{m}_{j_{q}} & \textbf{m}_{k_{i}} & \textbf{m}_{j_{q+1}} & \cdots & \textbf{m}_{j_{r}} \\ & & & & & & & \end{array}\right)$
  \end{center}
  where $M''$ inherits its row and column indices from $A''$. Any bijection 
  \begin{center}
    $\sigma'':\{j_{1},\ldots,j_{q},k_{i},j_{q+1},\ldots,j_{r}\} \rightarrow \{i_{1},i_{2},\ldots,i_{r+1}\}$
  \end{center}
  such that
  \begin{center}
    $tropdet(M'') = m_{\sigma''(j_{1}),j_{1}}'' \odot \cdots \odot m_{\sigma''(j_{q}),j_{q}}'' \odot m_{\sigma''(k_{i}),k_{i}}'' \odot m_{\sigma''(j_{q+1}),j_{q+1}}'' \odot \cdots \odot m_{\sigma''(j_{r}),j_{r}}''$
  \end{center}
  corresponds with a bijection $\sigma'$ from $\{j_{1},\ldots,j_{r},j_{r+1}\}$ to $\{i_{1},\ldots,i_{r},i_{r+1}\}$ where $\sigma'(j_{p}) = \sigma''(j_{p})$ for $p < r+1$, $\sigma'(j_{r+1}) = \sigma''(k_{i})$, and
  \begin{center}
    $tropdet(M') = m_{\sigma'(j_{1}),j_{1}}' \odot \cdots \odot m_{\sigma'(j_{r+1}),j_{r+1}}'$.
  \end{center}
  The submatrix $M''$ corresponds with an $(r+1) \times (r+1)$ submatrix of $A$, and therefore is symmetrically tropically singular. This implies there are two distinct bijections from $\{j_{1},\ldots,j_{r+1}\}$ to $\{i_{1},\ldots,i_{r+1}\}$, both of which define the tropical determinant of $M'$ in the way $\sigma'$ did above, and which define two monomials that are distinct even under the equivalence $X_{i,j} = X_{j,i}$. These monomials must be minimizing monomials for the submatrix $M$, and therefore $M$ is symmetrically tropically singular. 

  Identical reasoning applies if $M$ has a row $n+1$, but not a column $n+1$.
  
  If $M$ has both a row $n+1$ and a column $n+1$ then we may define $M'$ exactly as we did above, and if $k_{i} \in \{j_{1},\ldots,j_{r}\}$ then the proof goes through without modification. So, suppose $k_{i} \notin \{j_{1},\ldots,j_{r}\}$. In this case the proof above still goes through without modification, if we just note that $M''$ corresponds with an $(r+1) \times (r+1)$ submatrix of $A''$ with a row $n+1$, but not a column $n+1$, and is therefore symmetrically tropically singular.

  So, every $(r+1) \times (r+1)$ submatrix of $A''$ is symmetrically tropically singular, and therefore $A''$ has symmetric tropical rank at most $r$. As $A$ has symmetric tropical rank $r$ there is an $r \times r$ submatrix of $A$ that is symmetrically tropically nonsingular, and there will be a corresponding submatrix in $A''$. So, $A''$ has symmetric tropical rank $r$.
\end{proof}

\begin{lemma}
  Suppose $A$ is an $n \times n$ symmetric matrix with symmetric tropical rank $r$. Construct the $(n+1) \times (n+1)$ matrix $A'''$ from $A$ by choosing a number $P$ that is greater than any entry of $A$, a number $M$ that is less than any entry of $A$, and defining
  \begin{center}
    $A''' = \left(\begin{array}{ccc|c} & & & P \\ & A & & \vdots \\ & & & P \\ \hline P & \cdots & P & M \end{array}\right)$.
  \end{center}
  The matrix $A'''$ is symmetric and has symmetric tropical rank $r+1$.
\end{lemma}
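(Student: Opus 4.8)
The plan is to mirror the proof of Lemma 3 almost verbatim, replacing ``tropically singular'' by ``symmetrically tropically singular'' throughout, and checking at each step that the pairs of minimizing monomials we produce remain distinct under the identification $X_{i,j}=X_{j,i}$. Because the bordering row and column are \emph{constant} (all entries equal to $P$) and the corner entry $M$ lies on the diagonal, this identification causes none of the rerouting complications encountered in Lemma 4, so the argument stays short. First, $A'''$ is symmetric by inspection: the appended row and appended column are both the constant vector with off-corner entries $P$, and the corner entry $M$ is diagonal.

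For the lower bound, since $A$ has symmetric tropical rank $r$ it contains an $r\times r$ symmetrically tropically nonsingular submatrix $B$; let its rows be $\{a_1,\dots,a_r\}$, its columns $\{b_1,\dots,b_r\}$, and its tropical determinant $D$. I would consider the $(r+1)\times(r+1)$ submatrix $B'$ of $A'''$ on rows $\{a_1,\dots,a_r,n+1\}$ and columns $\{b_1,\dots,b_r,n+1\}$. A bijection $\rho$ from the rows of $B'$ to its columns either sends $n+1\mapsto n+1$, in which case the associated monomial has value $M\odot(\text{a monomial value of }B)$, minimized at $M\odot D$; or it sends $n+1$ to some $b_j$ and some $a_k$ to $n+1$, in which case the monomial picks up the two border entries and has value at least $P\odot P\odot(\text{a product of }r-1\text{ entries of }A)$. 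Choosing $P$ large and $M$ small, as in Lemma 3, makes $M\odot D$ strictly smaller than every monomial of the second type, so the unique minimizing monomial of $B'$ is $X_{n+1,n+1}$ times the unique (under $X_{i,j}=X_{j,i}$) minimizing monomial of $B$. Hence $B'$ is symmetrically tropically nonsingular and the symmetric tropical rank of $A'''$ is at least $r+1$.

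For the upper bound, I would take an arbitrary $(r+2)\times(r+2)$ submatrix $N$ of $A'''$ and split into the same four cases as in Lemma 3. If $N$ meets neither row $n+1$ nor column $n+1$, it is a submatrix of $A$ of size exceeding $r$, hence symmetrically tropically singular. If $N$ uses row $n+1$ but not column $n+1$ (the case using column $n+1$ but not row $n+1$ being identical after transposing, which is legitimate since $A'''$ is symmetric), I expand $\mathrm{tropdet}(N)$ along row $n+1$, all of whose entries in $N$ equal $P$; the minimizing term of this expansion is $P\odot\mathrm{tropdet}(N')$ for some $(r+1)\times(r+1)$ submatrix $N'$ of $A$, which is symmetrically tropically singular, and its two distinct minimizing monomials, each multiplied by the common factor $X_{n+1,j}$ (which involves an index absent from $N'$), give two distinct minimizing monomials of $N$. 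Finally, if $N$ uses both row $n+1$ and column $n+1$, then for $P$ large and $M$ small every bijection realizing $\mathrm{tropdet}(N)$ must route $n+1\mapsto n+1$ through the corner $M$ (a corner-free bijection picks up two border $P$'s and is too large); deleting that row and column leaves an $(r+1)\times(r+1)$ submatrix $N''$ of $A$, which is symmetrically tropically singular, and appending the factor $X_{n+1,n+1}$ to its two distinct minimizing monomials exhibits $N$ as symmetrically tropically singular. Thus every $(r+2)\times(r+2)$ submatrix of $A'''$ is symmetrically tropically singular, the symmetric tropical rank of $A'''$ is at most $r+1$, and combined with the lower bound it equals $r+1$.

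The step I expect to require the most attention is the bookkeeping of the identification $X_{i,j}=X_{j,i}$: in the cases where $N$ meets the new row or column, the two witnessing monomials differ only inside a submatrix of $A$, where the relevant distinctness is precisely the symmetric singularity hypothesis for that submatrix, and the common factor we append ($X_{n+1,j}$ or $X_{n+1,n+1}$) involves the index $n+1$, which does not occur there and so cannot create a collision. The only quantitative point is choosing $P$ and $M$ extreme enough that corner-free bijections are never minimal, which is exactly the estimate already used in Lemma 3.
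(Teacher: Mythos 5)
Your proof is correct and takes exactly the route the paper does: the paper's own proof of this lemma consists of the observation that $A'''$ is obviously symmetric plus the statement that the argument of Lemma 3 goes through verbatim with the symmetric definitions substituted, and your write-up is precisely that substitution, with the worthwhile extra check that the appended common factor ($X_{n+1,j}$ or $X_{n+1,n+1}$) involves the index $n+1$ and so cannot collapse two monomials that were distinct under the identification $X_{i,j}=X_{j,i}$. No gaps.
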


\begin{proof}
  As $A$ is symmetric $A'''$ is obviously symmetric.
  
  The proof that $A'''$ has tropical rank $r+1$ goes exactly the same as the proof of Lemma 5.3, replacing all the pertinent definitions by their symmetric counterparts.
\end{proof}

\subsection{Dimension growth for standard matrices}

We now prove the lemmas at the heart of this chapter. All concern how the dimensions of the determinantal tropical prevarities grow when the size of the matrix is increased. We begin with general matrices, and then turn to symmetric matrices.

\begin{lemma}
  $(dim(T_{m,n+1,r}) - dim(T_{m,n,r})) \geq r-1$, and $(dim(T_{m+1,n,r})-dim(T_{m,n,r})) \geq r-1$. 
\end{lemma}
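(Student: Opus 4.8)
The plan is to show how to enlarge a matrix realizing the dimension of $T_{m,n,r}$ at a point of generic stratum while staying inside $T_{m,n+1,r}$, and to do so using $r-1$ genuinely new free parameters. Concretely, let $A$ be an $m\times n$ matrix with tropical rank exactly $r$ lying in the interior of a top-dimensional stratum of $T_{m,n,r}$, so that a neighborhood of $A$ inside $T_{m,n,r}$ has dimension $d := \dim(T_{m,n,r})$. I want to produce a family of matrices in $T_{m,n+1,r}$ parametrized by (a neighborhood of $A$ in $T_{m,n,r}$) $\times$ (an open set in $\mathbb{R}^{r-1}$), with the map injective, which forces $\dim(T_{m,n+1,r}) \geq d + (r-1)$.

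The construction: by Lemma 2, appending to $A$ a column that is a tropical linear combination of columns of $A$ keeps the tropical rank equal to $r$, hence stays in $T_{m,n+1,r}$. Such a tropical linear combination $\mathbf{a}_{n+1}' = \bigoplus_{k} c_{i_k}\odot \mathbf{a}_{i_k}$ of, say, $r$ chosen columns $\mathbf{a}_{i_1},\ldots,\mathbf{a}_{i_r}$ depends on the $r$ coefficients $c_{i_1},\ldots,c_{i_r}$, but adding a common constant to all of them merely translates the new column, which — after we also let $A$ itself vary — is absorbed; so the essential number of new parameters is $r-1$. The key point I must verify is that these $r-1$ directions are genuinely independent of the $d$ directions already available within $T_{m,n,r}$: that is, that varying the $c$'s moves the appended column $\mathbf{a}_{n+1}'$ in a way not achievable by varying $A$ alone. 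This is where Lemma 1 does the work: since the columns $\mathbf{a}_{i_1},\ldots,\mathbf{a}_{i_r}$ can be taken to form a tropically nonsingular $r\times r$ submatrix (such a submatrix exists because $A$ has tropical rank $r$, and the rows can be chosen to witness it), Lemma 1 gives constants making the tropical linear combination behave like an honest linear combination locally — the "which term achieves the minimum in each coordinate" pattern is locally constant and distinct across coordinates — so small perturbations of the $c_{i_k}$ produce $r-1$ linearly independent tangent directions on the appended column, and these are detectable in, say, $r-1$ of the rows that form the nonsingular submatrix. Hence the parametrization $(A, c) \mapsto A'$ is locally injective with image of dimension $d + (r-1)$, all inside $T_{m,n+1,r}$.

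For the row statement, transpose: appending a tropical-linear-combination row keeps tropical rank $r$ (the "mutatis mutandis" half of Lemma 2), and the identical argument with rows in place of columns gives $\dim(T_{m+1,n,r}) - \dim(T_{m,n,r}) \geq r-1$.

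The main obstacle I anticipate is the independence claim — making rigorous that the $r-1$ coefficient directions are not redundant with motion inside $T_{m,n,r}$ and do not run into the boundary of the stratum. The clean way to handle this is: fix $A$ in the relative interior of a top stratum, fix a tropically nonsingular $r\times r$ submatrix $B$ of $A$ with a unique minimizing permutation $\sigma$, apply Lemma 1 to get $c_1,\ldots,c_r$ with strict inequalities $c_{\sigma(i)}\odot a_{i,\sigma(i)} < c_j\odot a_{i,j}$ for $j\neq\sigma(i)$; strictness means that for $(c_1,\ldots,c_r)$ in a small open ball the entry $a_{i,n+1}' = \min_k (c_{i_k}+a_{i,i_k})$ equals $c_{i_{k(i)}} + a_{i,i_{k(i)}}$ for a fixed index $k(i)$, depending linearly on the $c$'s, with the $r$ coordinates $i$ ranging over the rows of $B$ hitting $r$ distinct columns — so the Jacobian of $(c_1,\ldots,c_r)\mapsto (a_{i,n+1}')_{i\in\text{rows of }B}$ is a permutation matrix, of rank $r$, and modding out the one translation direction leaves rank $r-1$ independent of any motion of $A$. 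That, together with openness of the tropical-rank-$r$ locus near a nonsingular $r\times r$ submatrix, closes the argument.
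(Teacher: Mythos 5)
Your construction is essentially the paper's: append to $A$ a tropical linear combination of the columns of a tropically nonsingular maximal square submatrix, with coefficients supplied by Lemma 1 so that in each distinguished row a different single term strictly achieves the minimum; Lemma 2 keeps the enlarged matrix in the prevariety, and the strict inequalities (which persist under small perturbations of both the matrix and the coefficients) make the enlarged matrix depend injectively on the data. The one genuine difference in mechanism is where the extra $r-1$ directions come from: the paper fixes the coefficients and instead tropically rescales the first $r-1$ columns of the enlarged matrix, checking that such a rescaling can return you to the embedded image only if every scalar is $0$; you vary the coefficients of the appended column directly. Both verifications amount to reading the coefficients (or scalars) off the distinguished entries of the new column, so this is a repackaging rather than a new route. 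You are also more careful than the paper on one point, namely insisting that $A$ sit in a top-dimensional stratum so that the local dimension equals $\dim(T_{m,n,r})$.

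There is, however, a concrete problem with your parameter count, consisting of two errors that happen to cancel. First, an indexing slip: as $T_{m,n,r}$ is actually used in this paper it is the prevariety cut out by the $r \times r$ minors, so a generic point of it has tropical rank $r-1$, its tropically nonsingular square submatrix is $(r-1) \times (r-1)$, and the appended tropical linear combination carries $r-1$ coefficients, not $r$. (A matrix of tropical rank exactly $r$ has a tropically nonsingular $r \times r$ submatrix and so is not in this prevariety at all.) Second, the claim that adding a common constant to all the $c_{i_k}$ is ``absorbed'' once $A$ is allowed to vary is false: translating only the appended column produces a matrix whose first $n$ columns are unchanged, and no motion of $A$ reproduces it; your own observation that the Jacobian from the coefficients to the distinguished entries of the new column is a permutation matrix shows that \emph{every} coefficient is recoverable, so there is no redundancy to quotient out. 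With $r$ coefficients and a spurious quotient you land on $r-1$ by accident; with the correct $r-1$ coefficients the same quotient would leave only $r-2$, which does not prove the lemma. The repair is simply to work with the $r-1$ coefficients attached to a tropically nonsingular $(r-1) \times (r-1)$ submatrix and keep all of them as independent parameters.
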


\begin{proof}
  Suppose $A$ is an $m \times n$ matrix of tropical rank $r-1$. Permuting the rows and columns of a matrix does not change the tropical rank, and so we may assume that the upper-left $(r-1) \times (r-1)$ submatrix of $A$ is tropically nonsingular, and its determinant is realized by the tropical product of the diagonal terms (the classical trace).
  
  Using $A$, define an $m\times (n+1)$ matrix $A'$ by appending to $A$ a tropical linear combination of the first $r-1$ columns of $A$. By Lemma 1 we can pick the coefficients $c_{1},\ldots,c_{r-1}$ for this linear combination such that $c_{i} \odot a_{ii} < c_{j} \odot a_{ij}$ for all $i,j \leq r-1$ with $i \neq j$. By Lemma 2 this matrix $A'$ will have tropical rank $r-1$. 
  
  Viewing $A$ as a point in $\mathbb{R}^{m \times n}$ we define $T_{A,\epsilon}$ to be the intersection of $T_{m,n,r}$ with $B_{A,\epsilon}$, an $\epsilon$-ball centered at $A$:
  \begin{center}
    $T_{A,\epsilon} = T_{m,n,r} \cap B_{A,\epsilon}$.
  \end{center}
  For $\epsilon$ sufficiently small every matrix in $T_{A,\epsilon}$ will, like $A$, have a nonsingular $(r-1) \times (r-1)$ upper-left submatrix with determinant given by the tropical product of the diagonal terms. Similarly, for sufficiently small $\epsilon$, we can use the coefficients $c_{1},\ldots,c_{r-1}$ to define a matrix $B' \in T_{m,n+1,r}$ for any matrix $B \in T_{A,\epsilon}$, such that $c_{i} \odot b_{ii} < c_{j} \odot b_{ij}$ for all $i,j \leq r-1$ with $i \neq j$. This defines an embedding of $T_{A,\epsilon}$ into $T_{m,n+1,r}$. Call this embedding $T_{A,\epsilon}'$.
  
  Tropically multiplying a column of a matrix by a real number does not change the tropical rank. So, for any matrix $B' \in T_{m,n+1,r}$ we can multiply the first $r-1$ columns by constants $c_{1},\ldots,c_{r-1}$ and obtain another point in $T_{m,n+1,r}$. In this way we construct an $(r-1)$-dimensional linear subspace of $T_{m,n+1,r}$. Call this linear subspace $L'_{B'}$. Suppose $B' \in T_{A,\epsilon}'$, and so $B'$ is the image of a matrix $B \in T_{A,\epsilon}$ under our embedding. The intersection $L_{B'}' \cap T_{A,\epsilon}'$ is just the point $B'$. To see this, suppose there were another point, $C' \in L_{B'}' \cap T_{A,\epsilon}'$. This matrix $C'$ would have to be the image of a matrix $C \in T_{A,\epsilon}$ under our embedding, and $C$ would be given by tropically multiplying the first $(r-1)$ columns of $B$ by the appropriate real numbers. The final column of $C'$ would have to be the same as the final column of $B'$, but this would imply the first $(r-1)$ diagonal entries of $C$ are the same as the first $(r-1)$ diagonal entries of $B$, which would imply all the real number tropical multiples are $0$, which would mean $B = C$, and so $B' = C'$.
  
  From this we get $(dim(T_{m,n+1,r}) - dim(T_{m,n,r})) \geq r-1$, and using identical reasoning we can get $(dim(T_{m+1,n,r}) - dim(T_{m,n,r})) \geq r-1$.
\end{proof}

\begin{lemma}
  $(dim(T_{m+1,n+1,r+1}) - dim(T_{m,n,r})) \geq m+n+1$.
\end{lemma}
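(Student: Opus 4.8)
The plan is to mimic the one-dimension-bump argument of Lemma~7, but to use Lemma~5.3 (the $P/M$ bordering construction) instead of Lemma~5.2, so that the rank jumps from $r$ to $r+1$ while the ambient matrix grows by one row \emph{and} one column simultaneously; this should produce an embedding of a full-dimensional piece of $T_{m,n,r}$ into $T_{m+1,n+1,r+1}$ together with enough extra free parameters (the entries $P,\ldots,P$ in the new row and column, plus the corner $M$) to account for the claimed gain of $m+n+1$. First I would start with a matrix $A \in \mathbb{R}^{m\times n}$ of tropical rank exactly $r$, viewed as a point of $T_{m,n,r}$, and take the intersection $T_{A,\epsilon} = T_{m,n,r}\cap B_{A,\epsilon}$ for small $\epsilon$; every matrix in this ball still has tropical rank $r$ and, shrinking $\epsilon$, we may fix a particular tropically nonsingular $r\times r$ submatrix together with the permutation realizing its determinant, uniformly over the ball.

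Next I would define a map from $T_{A,\epsilon} \times U$ into $T_{m+1,n+1,r+1}$, where $U \subseteq \mathbb{R}^{m+n+1}$ is a small open set of parameters $(p_1,\dots,p_m,\,q_1,\dots,q_n,\,\mu)$: send $(B,(p_i),(q_j),\mu)$ to the bordered matrix obtained by appending a new column with entries $p_1,\dots,p_m$, a new row with entries $q_1,\dots,q_n$, and corner entry $\mu$. If the $p_i$ and $q_j$ are all chosen large (larger than every entry of $B$ — possible uniformly on the $\epsilon$-ball) and $\mu$ is chosen small, then Lemma~5.3 guarantees the result has tropical rank $r+1$, so this is indeed a map into $T_{m+1,n+1,r+1}$. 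The source has dimension $\dim(T_{m,n,r}) + (m+n+1)$, so it remains to check the map is injective on a full-dimensional subset — and since the appended row and column are recovered verbatim from the bordered matrix, and the original block is just the complementary submatrix, injectivity is immediate (the map is literally a coordinate inclusion followed by nothing). Hence $\dim(T_{m+1,n+1,r+1}) \geq \dim(T_{m,n,r}) + m+n+1$.

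The one genuine subtlety is the "full-dimensional" qualifier: $T_{m,n,r}$ need not be irreducible or equidimensional, so I should phrase the argument as: pick a point $A$ lying in a top-dimensional component, work in a ball small enough that $T_{A,\epsilon}$ has dimension $\dim(T_{m,n,r})$, and note the parameter box $U$ is honestly open in $\mathbb{R}^{m+n+1}$ (largeness/smallness are open conditions once $\epsilon$ is fixed). The image then contains a subset homeomorphic to $T_{A,\epsilon}\times U$, giving the dimension count. The main obstacle — and it is a mild one — is just making sure the "$P$ greater than any entry, $M$ less than any entry" hypotheses of Lemma~5.3 can be met simultaneously for \emph{all} $B$ in the ball and all parameters in $U$; this is handled by taking $\epsilon$ small first (so all entries of every $B\in T_{A,\epsilon}$ lie in a fixed bounded interval), then choosing $U$ to be a small neighborhood of a point $(P,\dots,P,P,\dots,P,M)$ with $P$ above and $M$ below that interval.
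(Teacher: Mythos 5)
Your route is correct in outline but genuinely different from the paper's. The paper borders every $B \in T_{A,\epsilon}$ with the \emph{same} constants $P$ and $M$ (so the embedding of $T_{A,\epsilon}$ adds no new parameters by itself), and then harvests the extra $m+n+1$ dimensions from the group of tropical row and column scalings acting on the bordered matrix: there are $m+n+2$ scaling parameters minus one global redundancy, and the orbit $L''_{B''}$ meets the embedded copy $T''_{A,\epsilon}$ only at $B''$ because every matrix in that copy has the same last row and column. You instead let the border entries themselves vary, parametrizing by $(p_1,\ldots,p_m,q_1,\ldots,q_n,\mu)$, which makes injectivity a triviality (the map is a coordinate inclusion) and avoids the fiber-intersection check entirely. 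That is arguably cleaner, and your handling of the uniformity of the largeness/smallness conditions over the $\epsilon$-ball, and of the need to center the ball at a point of a top-dimensional cell, matches what the paper leaves implicit.

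The one step you cannot simply cite is the claim that ``Lemma~5.3 guarantees the result has tropical rank $r+1$'': that lemma is stated only for a border that is \emph{constant} equal to $P$ off the corner, and your borders are non-constant. You cannot reduce to the constant case by row/column scalings either, since those would also perturb the inner block $B$, which you need to hold fixed. So you must observe that the \emph{proof} of Lemma~3 goes through verbatim when the off-corner border entries are merely all sufficiently large and the corner sufficiently small: the nonsingular $(r+1)\times(r+1)$ witness still has its tropical determinant realized uniquely through the corner entry; an $(r+2)\times(r+2)$ submatrix meeting only the new row (or only the new column) is singular by expansion along that row, since each complementary $(r+1)\times(r+1)$ minor of $B$ is singular; and one meeting both forces every minimizing permutation through the corner, reducing again to singularity of an $(r+1)\times(r+1)$ minor of $B$. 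With that verification supplied (and with the same caveat as the paper that ``large'' and ``small'' may need to be taken extreme enough relative to the spread of the entries of $B$, not merely above the maximum and below the minimum), your argument is complete.
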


\begin{proof}
  For $A \in T_{m,n,r}$ we define $T_{A,\epsilon}$ in exactly the same manner as the previous lemma. By Lemma 5.3, for any matrix $B \in T_{A,\epsilon}$ there is a matrix $B'' \in T_{m+1,n+1,r+1}$ defined by
  \begin{center}
    $B'' = \left(\begin{array}{ccc|c} & & & P \\ & B & & P \\ & & & P \\ \hline P & P & P & M \end{array}\right)$
  \end{center}
  where $P$ is larger than any entry in $A$, and $M$ is smaller than any entry in $A$. For $\epsilon$ sufficiently small, this defines an embedding of $T_{A,\epsilon}$ into $T_{m+1,n+1,r+1}$, where the values of $P$ and $M$ are the same for every matrix in the image of the embedding. Call this embedding $T_{A,\epsilon}''$. 

  As noted in the previous lemma, tropically multiplying a column or row of a matrix by a real number does not change its tropical rank. So, for any matrix $B'' \in T_{A,\epsilon}''$ there is a $m + n + 1$ dimensional subspace of $T_{m+1,n+1,r+1}$ formed by tropically multiplying the rows and columns of $B''$ by real numbers (It is not an $m+n+2$ dimensional subspace because adding the same number to all the columns, and then subtracting that number from all the rows, leaves the matrix unchanged). Call this subspace $L''_{B''}$. The intersection $L''_{B''} \cap T_{A,\epsilon}''$ is just the matrix $B''$. We can see this by noting that for every element of $T_{A,\epsilon}''$ the right column and bottom row are the same, and the only element of $L''_{B''}$ with this given right column and bottom row is the matrix $B''$.
\end{proof}

\subsection{Dimension growth for symmetric matrices}

Lemmas 6 and 7 both focus on a neighborhood of a matrix $A \in T_{m,n,r}$. For the symmetric version of Lemma 6 we will require that our matrix $A \in S_{n,r}$ not only have a symmetrically tropically nonsingular $r \times r$ submatrix, but a \emph{tropically nonsingular} $r \times r$ submatrix.

\begin{lemma}
  Suppose $A \in S_{n,r}$, and $A$ has an $(r-1) \times (r-1)$ submatrix that is tropically nonsingular (not just symmetrically tropically nonsingular). Viewing $A$ as a point in $\mathbb{R}^{\binom{n}{2}}$ define $S_{A,\epsilon}$ to be the intersection of $S_{n,r}$ with $B_{A,\epsilon}$, an $\epsilon$-ball centered at $A$:
  \begin{center}
    $S_{A,\epsilon} = S_{n,r} \cap B_{A,\epsilon}$.
  \end{center}
  For $\epsilon$ sufficiently small we have the relation $(dim(S_{n+1,r})-dim(S_{A,\epsilon})) \geq r-1$.
\end{lemma}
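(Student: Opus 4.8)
The plan is to follow the proof of Lemma 6 as closely as possible, substituting the two operations used there by their symmetric counterparts. In place of ``append a tropical linear combination of columns'' I will append such a column together with the matching row, which keeps the matrix symmetric and preserves symmetric tropical rank by the earlier lemma on appending a tropical-linear-combination column and its matching row to a symmetric matrix. In place of ``tropically rescale columns'' I will use tropical congruences $M \mapsto D \odot M \odot D$ with $D$ tropical diagonal, which preserve symmetric tropical rank. For the setup, apply Lemma 1 to the tropically nonsingular $r \times r$ submatrix $A[R,C]$ supplied by the hypothesis to obtain constants $(c_j)_{j \in C}$ and a realizing bijection $\sigma \colon R \to C$ with $c_{\sigma(i)} \odot a_{i,\sigma(i)} < c_j \odot a_{i,j}$ for all $i \in R$ and all $j \in C$ with $j \neq \sigma(i)$. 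Pick any $j^{\ast} \in C$, set $C' = C \setminus \{ j^{\ast} \}$, and let $A''$ be obtained from $A$ by appending the column $\bigoplus_{j \in C'} c_j \odot \mathbf{a}_j$ and the matching row. Then $A'' \in S_{n+1,r}$, and for each of the $r-1$ rows $i \in R$ with $\sigma(i) \neq j^{\ast}$ the new entry in row $i$ equals $c_{\sigma(i)} \odot a_{i,\sigma(i)}$, attained by a single term.

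Next I would build the embedding and the transverse family. For $\epsilon$ sufficiently small every $B \in S_{A,\epsilon}$ still has $B[R,C]$ tropically nonsingular with realizing bijection $\sigma$ and with the strict inequalities of Lemma 1 intact, so the same recipe with the same constants $c_j$ defines a map $\Phi \colon S_{A,\epsilon} \to S_{n+1,r}$; since $\Phi$ is piecewise linear and injective (one reads off $B$ as the top-left $n \times n$ block of $\Phi(B)$) it preserves dimension, and I set $S_{A,\epsilon}'' = \Phi(S_{A,\epsilon})$. For $\Phi(B) \in S_{A,\epsilon}''$, let $L_{\Phi(B)}''$ be the image of $\Phi(B)$ under all tropical congruences supported on the $(r-1)$-element set $C'$; this is an $(r-1)$-dimensional subspace of $S_{n+1,r}$, since such congruences are invertible, preserve symmetric tropical rank, and act freely on an $(r-1)$-element index set. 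The key claim is $L_{\Phi(B)}'' \cap S_{A,\epsilon}'' = \{ \Phi(B) \}$. To see it, suppose $D \odot \Phi(B) \odot D = \Phi(C)$ with $D$ supported on $C'$ and $C \in S_{A,\epsilon}$; matching the top-left blocks gives $c_{i,j} = b_{i,j} + d_i + d_j$, and then matching, for each $i \in R$ with $\sigma(i) \neq j^{\ast}$, the entry in row $i$ of the new column --- where the factor $d_i$ from the row rescaling cancels on the two sides --- reduces to the identity $\bigoplus_{j \in C'}( c_j + b_{i,j} ) = \bigoplus_{j \in C'}( c_j + b_{i,j} + d_j )$. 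By the Lemma 1 rigidity for $B$ the left-hand side is attained only at $j = \sigma(i)$, which first forces $d_{\sigma(i)} \geq 0$ and then, rerunning the comparison with all $d_j \geq 0$, forces $d_{\sigma(i)} = 0$; as $\sigma$ carries $\{ i \in R : \sigma(i) \neq j^{\ast} \}$ onto $C'$, every component of $D$ vanishes, so $\Phi(C) = \Phi(B)$ and hence $C = B$.

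Finally, the map sending $(B, D)$, with $B \in S_{A,\epsilon}$ and $D$ a congruence supported on $C'$, to $D \odot \Phi(B) \odot D \in S_{n+1,r}$ is injective: if two images agree then $\Phi(B_2)$ lies in $L_{\Phi(B_1)}'' \cap S_{A,\epsilon}''$, so $\Phi(B_2) = \Phi(B_1)$, whence $B_2 = B_1$ and, the congruence action being free, the two $D$'s agree. This map is piecewise linear, so its image --- a subset of $S_{n+1,r}$ --- has dimension $\dim(S_{A,\epsilon}) + (r-1)$, giving $\dim(S_{n+1,r}) - \dim(S_{A,\epsilon}) \geq r-1$. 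I expect the only real obstacle to be the claim $L_{\Phi(B)}'' \cap S_{A,\epsilon}'' = \{ \Phi(B) \}$: unlike in Lemma 6 one cannot rescale columns without rescaling the matching rows, so one must choose \emph{which} $r-1$ rows-and-columns to rescale (precisely those indexed by $C'$, one column of the realizing bijection having been dropped) so that the row-rescaling contribution cancels out of the new-column comparison and the rigidity of Lemma 1 can be brought to bear. A minor related point is that $A$ can only be conjugated by a permutation, so the given tropically nonsingular $r \times r$ submatrix need not be principal; the argument does not need it to be.
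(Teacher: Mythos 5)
Your overall strategy --- append a Lemma 1 tropical linear combination as a new column together with its matching row, embed a neighborhood $S_{A,\epsilon}$ into $S_{n+1,r}$, and then cut the image with the family of tropical congruences $D \odot (\cdot) \odot D$, using the rigidity of Lemma 1 to show each congruence orbit meets the embedded image only at one point --- is exactly the paper's, and your transversality computation (cancellation of $d_{i}$, then $d_{\sigma(i)} \geq 0$, then $d_{\sigma(i)} = 0$) is sound. But there is an off-by-one at the very first step on which the count depends: you apply Lemma 1 to ``the tropically nonsingular $r \times r$ submatrix supplied by the hypothesis.'' The hypothesis supplies only an $(r-1) \times (r-1)$ tropically nonsingular submatrix, and this cannot be upgraded: a matrix of symmetric tropical rank $r$ need not have ordinary tropical rank $r$ (the symmetrized Fano cocircuit matrix has symmetric tropical rank $4$ but tropical rank $3$), so $A$ may have no tropically nonsingular $r \times r$ submatrix at all, and Lemma 1 --- which genuinely requires tropical, not merely symmetric tropical, nonsingularity, since it rests on the tropical convex hull of the columns being full-dimensional --- cannot be invoked on any $r \times r$ block. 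With the hypothesis as actually stated, your construction, which discards one column $j^{\ast}$ of the realizing bijection, leaves only $r-2$ congruence parameters and yields the bound $r-2$, which is not enough.

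The repair is to delete the column-dropping step rather than to enlarge the submatrix. Apply Lemma 1 to the given $(r-1) \times (r-1)$ tropically nonsingular block $A[R,C]$ with $|R| = |C| = r-1$, append the column $\bigoplus_{j \in C} c_{j} \odot \textbf{a}_{j}$ and its matching row, and rescale all $r-1$ rows-and-columns indexed by $C$. The concern that motivated dropping $j^{\ast}$ --- that the row-rescaling contribution might fail to cancel from the new-column comparison --- does not materialize: for each $i \in R$ the term $d_{i}$ (present only when $i \in C$) appears on both sides of the comparison of $(i,n+1)$ entries whether or not the full set $C$ is rescaled, and even when $\sigma(i) = i$ the diagonal contribution $b_{i,i} + 2d_{i}$ is accounted for consistently as the $j = i$ term of the tropical sum. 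Your rigidity argument then gives $d_{\sigma(i)} = 0$ for every $i \in R$, hence $d_{j} = 0$ for all $j \in C$, and the count is $r-1$ as required. With that change your argument coincides with the paper's proof.
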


\begin{proof}
  The matrix $A$ has an $(r-1) \times (r-1)$ submatrix that is tropically nonsingular. This submatrix is formed by the row indices $\{i_{1},\ldots,i_{r-1}\}$ and the column indices $\{j_{1},\ldots,j_{r-1}\}$. By Lemma 1 there exists a bijection $\sigma$ from the row indices to the column indices of this submatrix, and coefficients $c_{\sigma(i_{1})},\ldots,c_{\sigma(i_{r-1})}$ such that, for all $k,l \leq r-1$, 
  \begin{center}
    $c_{\sigma(i_{k})} \odot a_{i_{k},\sigma(i_{k})} \leq c_{j_{l}} \odot a_{i_{k},j_{l}}$,
  \end{center}
  with equality if and only if $\sigma(i_{k}) = j_{l}$.
  
  Construct the matrix $A'$ by appending to the right of $A$ the column defined by
  \begin{center}
    $\textbf{a}_{n+1}' = c_{\sigma(i_{1})} \odot \textbf{a}_{\sigma(i_{1})} \oplus \cdots \oplus c_{\sigma(i_{r-1})} \odot \textbf{a}_{\sigma(i_{r-1})}$,
  \end{center}
  and construct the matrix $A''$ by appending to the bottom of $A'$ the row defined as a linear combination of rows from $A'$ in the same manner. By Lemma 4, the matrix $A''$ is symmetric and has symmetric tropical rank $r-1$.

  For $\epsilon$ sufficiently small every matrix in $S_{A,\epsilon}$ will, like $A$, have a tropically nonsingular $(r-1) \times (r-1)$ submatrix with row indices $\{i_{1},\ldots,i_{r-1}\}$ and column indices $\{j_{1},\ldots,j_{r-1}\}$. Furthermore, again for $\epsilon$ sufficiently small, we can use the coefficients $c_{\sigma(i_{1})},\ldots,c_{\sigma(i_{r-1})}$ to define a matrix $B'' \in S_{n+1,r}$ for any matrix $B \in S_{A,\epsilon}$. This defines an embedding of $S_{A,\epsilon}$ into $S_{n+1,r}$. Call this embedding $S_{A,\epsilon}''$.
  
  If we tropically multiply both row $i$ and column $i$ of a symmetric matrix by a real number $c$, then the matrix formed is still symmetric, and has the same symmetric tropical rank as the original matrix. So, for any matrix $B'' \in S_{n+1,r}$ we can tropically multiply rows $j_{1},j_{2},\ldots,j_{r-1}$ by constants $d_{j_{1}},d_{j_{2}},\ldots,d_{j_{r-1}}$, and columns $j_{1},j_{2},\ldots,j_{r-1}$ by the same constants to obtain another point in $S_{n+1,r}$. In this way we construct an $(r-1)$-dimensional linear subspace of $S_{n+1,r}$. Call this linear subspace $L_{B''}''$. 
 
  Suppose $B'' \in S_{A,\epsilon}''$, and so $B''$ is the image of a matrix $B \in S_{A,\epsilon}$ under our embedding. The intersection $L_{B''}'' \cap S_{A,\epsilon}''$ is just the point $B''$. To see this, suppose there were another point $C'' \in L_{B''}'' \cap S_{A,\epsilon}''$. This matrix $C''$ would be the image of a matrix $C \in S_{A,\epsilon}$, and $C$ would be given by tropically multiplying the rows $j_{1},j_{2},\ldots,j_{r-1}$ and the columns $j_{1},j_{2},\ldots,j_{r-1}$ of $B$ by the constants $d_{j_{1}},\ldots,d_{j_{r-1}}$. The $(i_{k},n+1)$ term of the image of $C$ will be 
  \begin{center}
    $(c_{\sigma(i_{k})} \odot a_{i_{k},\sigma(i_{k})}) \odot d_{\sigma(i_{k})} \odot d_{i_{k}}$
  \end{center}
  if $i_{k} \in \{j_{1},\ldots,j_{r-1}\}$, and 
  \begin{center}
    $(c_{\sigma(i_{k})} \odot a_{i_{k},\sigma(i_{k})}) \odot d_{\sigma(i_{k})}$ 
  \end{center}
  if not. The $(i_{k},n+1)$ term of $C'$ will be 
  \begin{center}
    $(c_{\sigma(i_{k})} \odot a_{i_{k},\sigma(i_{k})}) \odot d_{i_{k}}$ 
  \end{center}
  if $i_{k} \in \{j_{1},\ldots,j_{r-1}\}$, and 
  \begin{center}
    $(c_{\sigma(i_{k})} \odot a_{i_{k},\sigma(i_{k})})$ 
  \end{center}
  if not. In either case, for these terms to be equal we must have $d_{\sigma(i_{k})} = 0$, and as this must be true for all row indices $i_{k}$, and as $\sigma$ is a bijection from the row indices to the column indices, we have $d_{j_{1}} = d_{j_{2}} = \cdots = d_{j_{r-1}} = 0$. So, $C = B$, and therefore $C' = B'$.
  
  From this we get $(dim(S_{n+1,r}) - dim(S_{A,\epsilon})) \geq r-1$, and our lemma is proven.
\end{proof}

The symmetric version of Lemma 7 is very similar to its general counterpart.

\begin{lemma}
  $(dim(S_{n+1,r+1}) - dim(S_{n,r})) \geq n+1$.
\end{lemma}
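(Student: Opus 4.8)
The plan is to transcribe the proof of Lemma~7 to the symmetric setting, substituting the symmetric bordering construction (Lemma~5) for the general one and using that tropically scaling a row and its matching column of a symmetric matrix by a common constant preserves both symmetry and symmetric tropical rank (as observed in the proof of Lemma~8).

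First I would pick $A$ in the relative interior of a maximal-dimensional component of $S_{n,r}$, so that $S_{A,\epsilon} := S_{n,r} \cap B_{A,\epsilon}$ has dimension $dim(S_{n,r})$ for all sufficiently small $\epsilon$. Fix a number $P$ larger than every entry of $A$ by more than $\epsilon$ and a number $M$ smaller than every entry of $A$ by more than $\epsilon$; then the same $P$ and $M$ work for every $B \in S_{A,\epsilon}$, so by Lemma~5 the bordered matrix
\begin{center}
  $B''' = \left(\begin{array}{ccc|c} & & & P \\ & B & & \vdots \\ & & & P \\ \hline P & \cdots & P & M \end{array}\right)$
\end{center}
is symmetric with symmetric tropical rank $r+1$, i.e.\ $B''' \in S_{n+1,r+1}$. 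Since $B$ is recovered from $B'''$ by deleting its last row and column, $B \mapsto B'''$ embeds $S_{A,\epsilon}$ into $S_{n+1,r+1}$; call the image $S_{A,\epsilon}'''$, so that $dim(S_{A,\epsilon}''') = dim(S_{n,r})$.

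Next, for any symmetric $C \in S_{n+1,r+1}$ and any $d = (d_1,\ldots,d_{n+1}) \in \mathbb{R}^{n+1}$, replacing each entry $C_{i,j}$ by $C_{i,j} \odot d_i \odot d_j$ — that is, tropically multiplying row $i$ and column $i$ by $d_i$ for every $i$ — keeps the matrix symmetric and preserves symmetric tropical rank. For a fixed $B''' \in S_{A,\epsilon}'''$ these matrices form a linear subspace $L'''_{B'''} \subseteq S_{n+1,r+1}$ of dimension exactly $n+1$, since the linear map $d \mapsto (d_i + d_j)_{i,j}$ has trivial kernel (take $i = j$). This is exactly where the count parts ways from Lemma~7: in the symmetric case there is no one-dimensional gauge redundancy to subtract, so the gain is $n+1$ rather than $n$. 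Now every matrix of $S_{A,\epsilon}'''$ has last row and last column equal to $(P,\ldots,P,M)$, whereas a matrix of $L'''_{B'''}$ has $(i,n+1)$-entry $P + d_i + d_{n+1}$ for $i \le n$ and $(n+1,n+1)$-entry $M + 2d_{n+1}$; forcing these to agree forces $d_{n+1} = 0$ and then $d_i = 0$ for all $i$. Hence $L'''_{B'''} \cap S_{A,\epsilon}''' = \{B'''\}$.

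To conclude I would package this as the injectivity of the map $S_{A,\epsilon}''' \times \mathbb{R}^{n+1} \to S_{n+1,r+1}$ sending $(B''', d)$ to the $d$-scaling of $B'''$: if two pairs produce the same matrix, comparing the last row and column (which read $(P,\ldots,P,M)$ before scaling) forces the two tuples $d$ to coincide, and then invertibility of scaling forces the two matrices $B'''$ to coincide. An injective map out of a set of dimension $dim(S_{n,r}) + (n+1)$ yields $dim(S_{n+1,r+1}) \ge dim(S_{n,r}) + n + 1$, which is the lemma. The only step needing genuine care is the dimension bookkeeping for the symmetric scaling in the previous paragraph — verifying that all $n+1$ scaling directions are independent in the ambient space of symmetric matrices and that none is absorbed by a symmetry of the construction; everything else is a routine transcription of the proof of Lemma~7 once the symmetric analogue (Lemma~5) of the general bordering lemma is available.
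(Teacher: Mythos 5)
Your proposal is correct and follows essentially the same route as the paper: border each matrix of a full-dimensional neighborhood $S_{A,\epsilon}$ using the symmetric version of the bordering lemma, sweep out an $(n+1)$-dimensional family of symmetric row/column scalings, and pin down the intersection with the embedded neighborhood by comparing the constant last row and column $(P,\ldots,P,M)$. Your explicit remarks that the scaling map $d \mapsto (d_i + d_j)_{i,j}$ has trivial kernel and that the one-dimensional gauge redundancy of the general case disappears in the symmetric setting are welcome clarifications of points the paper leaves implicit.
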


\begin{proof}
  For $A \in S_{n,r}$ we define $S_{A,\epsilon}$ in exactly the same manner as in Lemma 5.9. By Lemma 5.6, for any matrix $A \in S_{n,r}$ there is a matrix $A''' \in S_{n+1,r+1}$ defined by
  \begin{center}
    $A''' = \left(\begin{array}{ccc|c} & & & P \\ & A & & P \\ & & & P \\ \hline P & P & P & M \end{array}\right)$
  \end{center}
  where $P$ is larger than any entry in $A$, and $M$ is smaller than any entry in $A$. For $\epsilon$ sufficiently small, this defines an embedding of $S_{A,\epsilon}$ into $S_{n+1,r+1}$, where the same values of $P$ and $M$ are used for each matrix in the image of this embedding. Call this embedding $S_{A,\epsilon}'''$.

  As noted in Lemma 5.9, tropically multiplying a column and row with the same index by a real number does not change the symmetric tropical rank of a matrix. So, for any matrix $B''' \in S_{A,\epsilon}'''$ there is a $n+1$ dimensional subspace of $S_{n+1,r+1}$ formed by tropically multiplying the rows and columns of $B'''$ with the same indices by real numbers. Call this subspace $L'''_{B'''}$. The intersection $L'''_{B'''} \cap S_{A,\epsilon}'''$ is just the matrix $B'''$. We can see this by noting that for every element of $S_{A,\epsilon}'''$ the right column and bottom row are the same, and the only element of $L'''_{B'''}$ with this given right column and bottom row is the matrix $B'''$.
\end{proof}

\subsection{The base cases}

We now have all the lemmas required to prove the inductive parts of our theorems. We simply require the base cases. In this subsection we will use the foundational examples from subsection 2.1 of this chapter to construct the base cases for our dimension inequalities. To do so, we note that if $A$ is an $n \times n$ singular matrix, with permutations that realize the tropical determinant $\sigma_{1},\sigma_{2},\ldots,\sigma_{k}$, then $A$, viewed as a point in $\mathbb{R}^{n \times n}$, will be on the linear space determined by the linear equations

\begin{center}
  
  $x_{1,\sigma_{1}(1)} + x_{2,\sigma_{1}(2)} + \cdots + x_{n,\sigma_{1}(n)} = x_{1,\sigma_{2}(1)} + x_{2,\sigma_{2}(2)} + \cdots + x_{n,\sigma_{1}(n)}$,
  
  $x_{1,\sigma_{1}(1)} + x_{2,\sigma_{1}(2)} + \cdots + x_{n,\sigma_{1}(n)} = x_{1,\sigma_{3}(1)} + x_{2,\sigma_{3}(2)} + \cdots + x_{n,\sigma_{3}(n)}$,
  
  $\vdots$
  
  $x_{1,\sigma_{1}(1)} + x_{2,\sigma_{1}(2)} + \cdots + x_{n,\sigma_{1}(n)} = x_{1,\sigma_{k}(1)} + x_{2,\sigma_{k}(2)} + \cdots + x_{n,\sigma_{k}(n)}$.
  
\end{center}

If we intersect this linear space with a sufficiently small $\epsilon$-ball in $\mathbb{R}^{n \times n}$ centered at $A$, every point in this intersection will correspond with a matrix having the same minimizing permutations as $A$. The dimension of this intersection will be the dimension of the linear space.

For example, the singular matrix
\begin{center}
  $Q = \left(\begin{array}{ccc} 0 & 0 & 1 \\ 0 & 0 & 1 \\ 1 & 1 & 0 \end{array}\right)$
\end{center}
\vspace{.1 in}
will be on the linear space defined by the linear equation
\begin{center}
  $x_{1,1} + x_{2,2}+ x_{3,3} = x_{1,2} + x_{2,1}+ x_{3,3}$.
\end{center}
Any matrix on this linear space within a sufficiently small $\epsilon$-ball around $Q$ will also be singular, and will have the same minimizing permutations as $Q$. Similarly, the singular matrix
\begin{center}
  $R = \left(\begin{array}{ccc} 0 & 0 & 0 \\ 0 & 1 & 0 \\ 0 & 0 & 1 \end{array}\right)$
\end{center}
will be on the linear space defined by the linear equations
\begin{center}
  $x_{1,1} + x_{2,3} + x_{3,2} = x_{1,2} + x_{2,3} + x_{3,1}$,
  
  $x_{1,1} + x_{2,3} + x_{3,2} = x_{1,3} + x_{2,1} + x_{3,2}$.
\end{center}
Any matrix on this linear space within a sufficiently small $\epsilon$-ball around $R$ will also be singular, and will have the same three minimizing permutations as $R$.

Extending this idea, if $B$ is an $m \times n$ matrix with tropical rank $r-1$, then for every $r \times r$ submatrix the permutations realizing the tropical determinant determine a linear space, and the intersection of the linear spaces determined by all the $r \times r$ submatrices is again a linear space. If we intersect the linear space determined by all $r \times r$ submatrices with a sufficiently small $\epsilon$-ball in $\mathbb{R}^{m \times n}$ centered at $B$, then every point in this intersection will correspond with an $m \times n$ matrix with tropical rank $r-1$, for which every $r \times r$ submatrix has the same minimizing permutations as the corresponding submatrix in $B$. In particular, the dimension of this intersection will be the dimension of the linear space determined by all $r \times r$ submatrices, and the dimension of this linear space cannot be greater than the dimension of the tropical prevariety $T_{m,n,r}$.

Along these lines we examine the matrix
\begin{center}
  $\left(\begin{array}{cccccc} 0 & 0 & 2 & 4 & 2 & 4 \\ 0 & 0 & 4 & 4 & 4 & 4 \\ 2 & 4 & 2 & 4 & 0 & 0 \\ 4 & 4 & 4 & 4 & 0 & 0 \\ 2 & 4 & 0 & 0 & 2 & 4 \\ 4 & 4 & 0 & 0 & 4 & 4 \end{array}\right)$,
\end{center}
the symmetric version of the matrix from \cite{sh1}. The minimizing permutations for each $5 \times 5$ submatrix determine the linear equations:
\begin{center}
  $x_{2,2} + x_{3,5} + x_{4,6} + x_{5,3} + x_{6,4} = x_{2,2} + x_{3,5} + x_{4,6} + x_{5,4} + x_{6,3}$;

  $x_{2,2} + x_{3,5} + x_{4,6} + x_{5,3} + x_{6,4} = x_{2,2} + x_{3,6} + x_{4,5} + x_{5,3} + x_{6,4}$;

  $x_{2,2} + x_{3,5} + x_{4,6} + x_{5,3} + x_{6,4} = x_{2,2} + x_{3,6} + x_{4,5} + x_{5,4} + x_{6,3}$; 

  $x_{2,1} + x_{3,5} + x_{4,6} + x_{5,3} + x_{6,4} = x_{2,1} + x_{3,5} + x_{4,6} + x_{5,4} + x_{6,3}$;

  $x_{2,1} + x_{3,5} + x_{4,6} + x_{5,3} + x_{6,4} = x_{2,1} + x_{3,6} + x_{4,5} + x_{5,3} + x_{6,4}$; 

  $x_{2,1} + x_{3,5} + x_{4,6} + x_{5,3} + x_{6,4} = x_{2,1} + x_{3,6} + x_{4,5} + x_{5,4} + x_{6,3}$;

  $x_{2,2} + x_{3,5} + x_{4,6} + x_{5,1} + x_{6,4} = x_{2,2} + x_{3,6} + x_{4,5} + x_{5,1} + x_{6,3}$; 

  $x_{2,2} + x_{3,5} + x_{4,6} + x_{5,1} + x_{6,3} = x_{2,2} + x_{3,6} + x_{4,5} + x_{5,1} + x_{6,3}$;

  $x_{2,2} + x_{3,1} + x_{4,6} + x_{5,3} + x_{6,4} = x_{2,2} + x_{3,1} + x_{4,6} + x_{5,4} + x_{6,3}$; 

  $x_{2,2} + x_{3,1} + x_{4,5} + x_{5,3} + x_{6,4} = x_{2,2} + x_{3,1} + x_{4,5} + x_{5,4} + x_{6,3}$;

  $x_{1,2} + x_{3,5} + x_{4,6} + x_{5,3} + x_{6,4} = x_{1,2} + x_{3,5} + x_{4,6} + x_{5,4} + x_{6,3}$; 

  $x_{1,2} + x_{3,5} + x_{4,6} + x_{5,3} + x_{6,4} = x_{1,2} + x_{3,6} + x_{4,5} + x_{5,3} + x_{6,4}$;

  $x_{1,2} + x_{3,5} + x_{4,6} + x_{5,3} + x_{6,4} = x_{1,2} + x_{3,6} + x_{4,5} + x_{5,4} + x_{6,3}$; 

  $x_{1,1} + x_{3,5} + x_{4,6} + x_{5,3} + x_{6,4} = x_{1,1} + x_{3,5} + x_{4,6} + x_{5,4} + x_{6,3}$;

  $x_{1,1} + x_{3,5} + x_{4,6} + x_{5,3} + x_{6,4} = x_{1,1} + x_{3,6} + x_{4,5} + x_{5,3} + x_{6,4}$; 

  $x_{1,1} + x_{3,5} + x_{4,6} + x_{5,3} + x_{6,4} = x_{1,1} + x_{3,6} + x_{4,5} + x_{5,4} + x_{6,3}$;

  $x_{1,2} + x_{3,5} + x_{4,6} + x_{5,3} + x_{6,4} = x_{1,2} + x_{3,6} + x_{4,5} + x_{5,1} + x_{6,4}$; 

  $x_{1,2} + x_{3,5} + x_{4,6} + x_{5,1} + x_{6,3} = x_{1,2} + x_{3,6} + x_{4,5} + x_{5,1} + x_{6,3}$;

  $x_{1,2} + x_{3,1} + x_{4,6} + x_{5,3} + x_{6,4} = x_{1,2} + x_{3,1} + x_{4,6} + x_{5,4} + x_{6,3}$; 

  $x_{1,2} + x_{3,1} + x_{4,5} + x_{5,3} + x_{6,4} = x_{1,2} + x_{3,1} + x_{4,5} + x_{5,4} + x_{6,3}$;

  $x_{1,3} + x_{2,2} + x_{4,6} + x_{5,3} + x_{6,4} = x_{1,3} + x_{2,2} + x_{4,6} + x_{5,4} + x_{6,3}$; 

  $x_{1,3} + x_{2,2} + x_{4,6} + x_{5,3} + x_{6,4} = x_{1,3} + x_{2,1} + x_{4,6} + x_{5,4} + x_{6,3}$;

  $x_{1,1} + x_{2,2} + x_{4,6} + x_{5,5} + x_{6,4} = x_{1,2} + x_{2,1} + x_{4,6} + x_{5,5} + x_{6,4}$; 

  $x_{1,1} + x_{2,2} + x_{4,6} + x_{5,5} + x_{6,3} = x_{1,2} + x_{2,1} + x_{4,6} + x_{5,5} + x_{6,3}$;

  $x_{1,1} + x_{2,2} + x_{4,6} + x_{5,3} + x_{6,4} = x_{1,1} + x_{2,2} + x_{4,6} + x_{5,4} + x_{6,3}$; 

  $x_{1,1} + x_{2,2} + x_{4,6} + x_{5,3} + x_{6,4} = x_{1,2} + x_{2,1} + x_{4,6} + x_{5,3} + x_{6,4}$;

  $x_{1,1} + x_{2,2} + x_{4,6} + x_{5,3} + x_{6,4} = x_{1,2} + x_{2,1} + x_{4,6} + x_{5,4} + x_{6,3}$; 

  $x_{1,1} + x_{2,2} + x_{4,5} + x_{5,3} + x_{6,4} = x_{1,1} + x_{2,2} + x_{4,5} + x_{5,4} + x_{6,3}$;

  $x_{1,1} + x_{2,2} + x_{4,5} + x_{5,3} + x_{6,4} = x_{1,2} + x_{2,1} + x_{4,5} + x_{5,3} + x_{6,4}$; 

  $x_{1,1} + x_{2,2} + x_{4,5} + x_{5,3} + x_{6,4} = x_{1,2} + x_{2,1} + x_{4,5} + x_{5,4} + x_{6,3}$;

  $x_{1,3} + x_{2,2} + x_{3,6} + x_{5,3} + x_{6,4} = x_{1,5} + x_{2,2} + x_{3,6} + x_{5,4} + x_{6,3}$; 

  $x_{1,5} + x_{2,1} + x_{3,6} + x_{5,3} + x_{6,4} = x_{1,5} + x_{2,1} + x_{3,6} + x_{5,4} + x_{6,3}$;

  $x_{1,1} + x_{2,2} + x_{3,6} + x_{5,5} + x_{6,4} = x_{1,2} + x_{2,1} + x_{3,6} + x_{5,5} + x_{6,4}$; 

  $x_{1,1} + x_{2,2} + x_{3,6} + x_{5,5} + x_{6,3} = x_{1,2} + x_{2,1} + x_{3,6} + x_{5,5} + x_{6,3}$;

  $x_{1,1} + x_{2,2} + x_{3,6} + x_{5,3} + x_{6,4} = x_{1,2} + x_{2,1} + x_{3,6} + x_{5,3} + x_{6,4}$; 

  $x_{1,1} + x_{2,2} + x_{3,6} + x_{5,3} + x_{6,4} = x_{1,1} + x_{2,2} + x_{3,6} + x_{5,4} + x_{6,3}$;

  $x_{1,1} + x_{2,2} + x_{3,6} + x_{5,3} + x_{6,4} = x_{1,2} + x_{2,1} + x_{3,6} + x_{5,4} + x_{6,3}$; 

  $x_{1,1} + x_{2,2} + x_{3,5} + x_{5,3} + x_{6,4} = x_{1,2} + x_{2,1} + x_{3,5} + x_{5,3} + x_{6,4}$;

  $x_{1,1} + x_{2,2} + x_{3,5} + x_{5,3} + x_{6,4} = x_{1,1} + x_{2,2} + x_{3,5} + x_{5,4} + x_{6,3}$; 

  $x_{1,1} + x_{2,2} + x_{3,5} + x_{5,3} + x_{6,4} = x_{1,2} + x_{2,1} + x_{3,5} + x_{5,4} + x_{6,3}$;

  $x_{1,3} + x_{2,2} + x_{3,5} + x_{4,6} + x_{6,4} = x_{1,3} + x_{2,2} + x_{3,6} + x_{4,5} + x_{6,4}$; 

  $x_{1,3} + x_{2,1} + x_{3,5} + x_{4,6} + x_{6,4} = x_{1,3} + x_{2,1} + x_{3,6} + x_{4,5} + x_{6,4}$;

  $x_{1,1} + x_{2,2} + x_{3,5} + x_{4,6} + x_{6,4} = x_{1,1} + x_{2,2} + x_{3,6} + x_{4,5} + x_{6,4}$; 

  $x_{1,1} + x_{2,2} + x_{3,5} + x_{4,6} + x_{6,4} = x_{1,2} + x_{2,1} + x_{3,5} + x_{4,6} + x_{6,4}$;

  $x_{1,1} + x_{2,2} + x_{3,5} + x_{4,6} + x_{6,4} = x_{1,2} + x_{2,1} + x_{3,6} + x_{4,5} + x_{6,4}$; 

  $x_{1,1} + x_{2,2} + x_{3,5} + x_{4,6} + x_{6,3} = x_{1,1} + x_{2,2} + x_{3,6} + x_{4,5} + x_{6,3}$; 

  $x_{1,1} + x_{2,2} + x_{3,5} + x_{4,6} + x_{6,3} = x_{1,2} + x_{2,1} + x_{3,5} + x_{4,6} + x_{6,3}$; 

  $x_{1,1} + x_{2,2} + x_{3,5} + x_{4,6} + x_{6,3} = x_{1,2} + x_{2,1} + x_{3,6} + x_{4,5} + x_{6,3}$;

  $x_{1,1} + x_{2,2} + x_{3,3} + x_{4,6} + x_{6,4} = x_{1,2} + x_{2,1} + x_{3,3} + x_{4,6} + x_{6,4}$; 

  $x_{1,1} + x_{2,2} + x_{3,3} + x_{4,5} + x_{6,4} = x_{1,2} + x_{2,1} + x_{3,3} + x_{4,5} + x_{6,4}$;

  $x_{1,3} + x_{2,2} + x_{3,5} + x_{4,6} + x_{5,4} = x_{1,3} + x_{2,2} + x_{3,6} + x_{4,5} + x_{5,4}$; 

  $x_{1,3} + x_{2,1} + x_{3,5} + x_{4,6} + x_{5,4} = x_{1,3} + x_{2,1} + x_{3,6} + x_{4,5} + x_{5,4}$;

  $x_{1,1} + x_{2,2} + x_{3,5} + x_{4,6} + x_{5,4} = x_{1,2} + x_{2,1} + x_{3,5} + x_{4,6} + x_{5,4}$; 

  $x_{1,1} + x_{2,2} + x_{3,5} + x_{4,6} + x_{5,4} = x_{1,1} + x_{2,2} + x_{3,6} + x_{4,5} + x_{5,4}$;

  $x_{1,1} + x_{2,2} + x_{3,5} + x_{4,6} + x_{5,4} = x_{1,2} + x_{2,1} + x_{3,6} + x_{4,5} + x_{5,4}$; 

  $x_{1,1} + x_{2,2} + x_{3,5} + x_{4,6} + x_{5,3} = x_{1,2} + x_{2,1} + x_{3,5} + x_{4,6} + x_{5,3}$;

  $x_{1,1} + x_{2,2} + x_{3,5} + x_{4,6} + x_{5,3} = x_{1,1} + x_{2,2} + x_{3,6} + x_{4,5} + x_{5,3}$; 

  $x_{1,1} + x_{2,2} + x_{3,5} + x_{4,6} + x_{5,3} = x_{1,2} + x_{2,1} + x_{3,6} + x_{4,5} + x_{5,3}$;

  $x_{1,1} + x_{2,2} + x_{3,3} + x_{4,6} + x_{5,4} = x_{1,2} + x_{2,1} + x_{3,3} + x_{4,6} + x_{5,4}$; 

  $x_{1,1} + x_{2,2} + x_{3,3} + x_{4,5} + x_{5,4} = x_{1,2} + x_{2,1} + x_{3,3} + x_{4,5} + x_{5,4}$.
\end{center}
The linear space determined by these linear equations has dimension $33$. The linear equations coming from the $4 \times 4$ submatrices of the matrix
\begin{center}  
  $\left(\begin{array}{ccccccc} 1 & 1 & 0 & 1 & 0 & 0 & 0 \\ 1 & 0 & 1 & 0 & 0 & 0 & 1 \\ 0 & 1 & 0 & 0 & 0 & 1 & 1 \\ 1 & 0 & 0 & 0 & 1 & 1 & 0 \\ 0 & 0 & 0 & 1 & 1 & 0 & 1 \\ 0 & 0 & 1 & 1 & 0 & 1 & 0 \\ 0 & 1 & 1 & 0 & 1 & 0 & 0 \end{array}\right)$,
\end{center}
the symmetric version of the cocircuit matrix of the Fano matroid from \cite{dss}, are too numerous to be practical to list, but the linear space they determine has dimension $34$.

For symmetric matrices we can apply the same analysis. The only difference is the relation $x_{i,j} = x_{j,i}$ on the variables, and that the space of $n \times n$ symmetric matrices is, consequently, equivalent to $\mathbb{R}^{\binom{n}{2}}$. For the $6 \times 6$ symmetric matrix
\begin{center}  
  $\left(\begin{array}{ccccccc} 1 & 1 & 0 & 1 & 0 & 0 & 0 \\ 1 & 0 & 1 & 0 & 0 & 0 & 1 \\ 0 & 1 & 0 & 0 & 0 & 1 & 1 \\ 1 & 0 & 0 & 0 & 1 & 1 & 0 \\ 0 & 0 & 0 & 1 & 1 & 0 & 1 \\ 0 & 0 & 1 & 1 & 0 & 1 & 0 \\ 0 & 1 & 1 & 0 & 1 & 0 & 0 \end{array}\right)$,
\end{center}
we get the linear equations:
\begin{center}
  $x_{2,2} + 2x_{3,5} + 2x_{4,6} = x_{2,2} + x_{3,5} + x_{3,6} + x_{4,5} + x_{4,6}$;

  $x_{2,2} + 2x_{3,5} + 2x_{4,6} = x_{2,2} + 2x_{3,6} + 2x_{4,5}$; 

  $x_{1,2} + 2x_{3,5} + 2x_{4,6} = x_{1,2} + x_{3,5} + x_{3,6} + x_{4,5} + x_{4,6}$; 

  $x_{1,2} + 2x_{3,5} + 2x_{4,6} = x_{1,2} + 2x_{3,6} + 2x_{4,5}$;

  $x_{1,5} + x_{2,2} + x_{3,5} + 2x_{4,6} = x_{1,5} + x_{2,2} + 2x_{3,6} + x_{4,5}$; 

  $x_{1,5} + x_{2,2} + x_{3,5} + x_{3,6} + x_{4,6} = x_{1,5} + x_{2,2} + 2x_{3,6} + x_{4,5}$;

  $x_{1,3} + x_{2,2} + x_{3,5} + 2x_{4,6} = x_{1,3} + x_{2,2} + x_{3,6} + x_{4,5} + x_{4,6}$; 

  $x_{1,3} + x_{2,2} + x_{3,5} + x_{4,5} + x_{4,6} = x_{1,3} + x_{2,2} + x_{3,6} + 2x_{4,5}$;

  $x_{1,2} + 2x_{3,5} + 2x_{4,6} = x_{1,2} + x_{3,5} + x_{3,6} + x_{4,5} + x_{4,6}$; 

  $x_{1,2} + 2x_{3,5} + 2x_{4,6} = x_{1,2} + 2x_{3,6} + 2x_{4,5}$; 

  $x_{1,1} + 2x_{3,5} + 2x_{4,6} = x_{1,1} + x_{3,5} + x_{3,6} + x_{4,5} + x_{4,6}$; 

  $x_{1,1} + 2x_{3,5} + 2x_{4,6} = x_{1,1} + 2x_{3,6} + 2x_{4,5}$;

  $x_{1,2} + 2x_{3,5} + 2x_{4,6} = x_{1,2} + x_{1,5} + x_{3,6} + x_{4,5} + x_{4,6}$; 

  $x_{1,2} + x_{1,5} + x_{3,5} + x_{3,6} + x_{4,6} = x_{1,2} + x_{1,5} + 2x_{3,6} + x_{4,5}$;

  $x_{1,2} + x_{1,3} + x_{3,5} + 2x_{4,6} = x_{1,2} + x_{1,3} + x_{3,6} + x_{4,5} + x_{4,6}$; 

  $x_{1,2} + x_{1,3} + x_{3,5} + x_{4,5} + x_{4,6} = x_{1,2} + x_{1,3} + x_{3,6} + 2x_{4,5}$;

  $x_{1,3} + x_{2,2} + x_{3,5} + 2x_{4,6} = x_{1,3} + x_{2,2} + x_{3,6} + x_{4,5} + x_{4,6}$; 

  $x_{1,3} + x_{2,2} + x_{3,5} + 2x_{4,6} = x_{1,2} + x_{1,3} + x_{3,6} + x_{4,5} + x_{4,6}$;

  $x_{1,1} + x_{2,2} + 2x_{4,6} + x_{5,5} = 2x_{1,2} + 2x_{4,6} + x_{5,5}$; 

  $x_{1,1} + x_{2,2} + x_{3,6} + x_{4,6} + x_{5,5} = 2x_{1,2} + x_{3,6} + x_{4,6} + x_{5,5}$;

  $x_{1,1} + x_{2,2} + x_{3,5} + 2x_{4,6} = x_{1,1} + x_{2,2} + x_{3,6} + x_{4,5} + x_{4,6}$; 

  $x_{1,1} + x_{2,2} + x_{3,5} + 2x_{4,6} = 2x_{1,2} + x_{3,5} + 2x_{4,6}$;

  $x_{1,1} + x_{2,2} + x_{3,5} + 2x_{4,6} = 2x_{1,2} + x_{3,6} + x_{4,5} + x_{4,6}$; 

  $x_{1,1} + x_{2,2} + x_{3,5} + x_{4,5} + x_{4,6} = x_{1,1} + x_{2,2} + x_{3,6} + 2x_{4,5}$;

  $x_{1,1} + x_{2,2} + x_{3,5} + x_{4,5} + x_{4,6} = 2x_{1,2} + x_{3,5} + x_{4,5} + x_{4,6}$; 

  $x_{1,1} + x_{2,2} + x_{3,5} + x_{4,5} + x_{4,6} = 2x_{1,2} + x_{3,6} + 2x_{4,5}$;

  $x_{1,3} + x_{2,2} + x_{3,5} + x_{3,6} + x_{4,6} = x_{1,5} + x_{2,2} + 2x_{3,6} + x_{4,5}$; 

  $x_{1,2} + x_{1,5} + x_{3,5} + x_{3,6} + x_{4,6} = x_{1,2} + x_{1,5} + 2x_{3,6} + x_{4,5}$;

  $x_{1,1} + x_{2,2} + x_{3,6} + x_{4,6} + x_{5,5} = 2x_{1,2} + x_{3,6} + x_{4,6} + x_{5,5}$; 

  $x_{1,1} + x_{2,2} + 2x_{3,6} + x_{5,5} = 2x_{1,2} + 2x_{3,6} + x_{5,5}$;

  $x_{1,1} + x_{2,2} + x_{3,5} + x_{3,6} + x_{4,6} = 2x_{1,2} + x_{3,5} + x_{3,6} + x_{4,6}$; 

  $x_{1,1} + x_{2,2} + x_{3,5} + x_{3,6} + x_{4,6} = x_{1,1} + x_{2,2} + 2x_{3,6} + x_{4,5}$;

  $x_{1,1} + x_{2,2} + x_{3,5} + x_{3,6} + x_{4,6} = 2x_{1,2} + 2x_{3,6} + x_{4,5}$; 

  $x_{1,1} + x_{2,2} + 2x_{3,5} + x_{4,6} = 2x_{1,2} + 2x_{3,5} + x_{4,6}$;

  $x_{1,1} + x_{2,2} + 2x_{3,5} + x_{4,6} = x_{1,1} + x_{2,2} + x_{3,5} + x_{3,6} + x_{4,5}$; 

  $x_{1,1} + x_{2,2} + 2x_{3,5} + x_{4,6} = 2x_{1,2} + x_{3,5} + x_{3,6} + x_{4,5}$;

  $x_{1,3} + x_{2,2} + x_{3,5} + 2x_{4,6} = x_{1,3} + x_{2,2} + x_{3,6} + x_{4,5} + x_{4,6}$; 

  $x_{1,2} + x_{1,3} + x_{3,5} + 2x_{4,6} = x_{1,2} + x_{1,3} + x_{3,6} + x_{4,5} + x_{4,6}$;

  $x_{1,1} + x_{2,2} + x_{3,5} + 2x_{4,6} = x_{1,1} + x_{2,2} + x_{3,6} + x_{4,5} + x_{4,6}$; 

  $x_{1,1} + x_{2,2} + x_{3,5} + 2x_{4,6} = 2x_{1,2} + x_{3,5} + 2x_{4,6}$;

  $x_{1,1} + x_{2,2} + x_{3,5} + 2x_{4,6} = 2x_{1,2} + x_{3,6} + x_{4,5} + x_{4,6}$; 

  $x_{1,1} + x_{2,2} + x_{3,5} + x_{3,6} + x_{4,6} = x_{1,1} + x_{2,2} + 2x_{3,6} + x_{4,5}$; 

  $x_{1,1} + x_{2,2} + x_{3,5} + x_{3,6} + x_{4,6} = 2x_{1,2} + x_{3,5} + x_{3,6} + x_{4,6}$; 

  $x_{1,1} + x_{2,2} + x_{3,5} + x_{3,6} + x_{4,6} = 2x_{1,2} + 2x_{3,6} + x_{4,5}$;

  $x_{1,1} + x_{2,2} + x_{3,3} + 2x_{4,6} = 2x_{1,2} + x_{3,3} + 2x_{4,6}$; 

  $x_{1,1} + x_{2,2} + x_{3,3} + x_{4,5} + x_{4,6} = 2x_{1,2} + x_{3,3} + x_{4,5} + x_{4,6}$;

  $x_{1,3} + x_{2,2} + x_{3,5} + x_{4,5} + x_{4,6} = x_{1,3} + x_{2,2} + x_{3,6} + 2x_{4,5}$; 

  $x_{1,2} + x_{1,3} + x_{3,5} + x_{4,5} + x_{4,6} = x_{1,2} + x_{1,3} + x_{3,6} + 2x_{4,5}$;

  $x_{1,1} + x_{2,2} + x_{3,5} + x_{4,5} + x_{4,6} = 2x_{1,2} + x_{3,5} + x_{4,5} + x_{4,6}$; 

  $x_{1,1} + x_{2,2} + x_{3,5} + x_{4,5} + x_{4,6} = x_{1,1} + x_{2,2} + x_{3,6} + 2x_{4,5}$;

  $x_{1,1} + x_{2,2} + x_{3,5} + x_{4,5} + x_{4,6} = 2x_{1,2} + x_{3,6} + 2x_{4,5}$; 

  $x_{1,1} + x_{2,2} + 2x_{3,5} + x_{4,6} = 2x_{1,2} + 2x_{3,5} + x_{4,6}$;

  $x_{1,1} + x_{2,2} + 2x_{3,5} + x_{4,6} = x_{1,1} + x_{2,2} + x_{3,5} + x_{3,6} + x_{4,5}$; 

  $x_{1,1} + x_{2,2} + 2x_{3,5} + x_{4,6} = 2x_{1,2} + x_{3,5} + x_{3,6} + x_{4,5}$;

  $x_{1,1} + x_{2,2} + x_{3,3} + x_{4,5} + x_{4,6} = 2x_{1,2} + x_{3,3} + x_{4,5} + x_{4,6}$; 

  $x_{1,1} + x_{2,2} + x_{3,3} + 2x_{4,5} = 2x_{1,2} + x_{3,3} + 2x_{4,5}$.
\end{center}
The linear space determined by these linear equations has dimension $19$. Note also that the principal $4 \times 4$ submatrix with row/column indices $\{2,3,4,5\}$ is tropically singular, and not just symmetrically tropically singular. So, Lemma 8 can be applied.

\subsection{The dimension inequalities}

We now have everything we need in order to prove the main theorems of this paper. The first theorem is about standard matrices, their determinantal varieties, and the dimensions of the associated tropical varieties and tropical prevarieties.

\begin{thm}
  If $m_{1},\ldots,m_{s}$ are the $r \times r$ minors of an $m \times n$ matrix of variables, and $I_{m,n,r} = (m_{1},\ldots,m_{s})$ is the corresponding determinantal ideal, then the minors are a tropical basis if and only if the dimension of the tropical variety $\mathcal{T}(\textbf{V}(I_{m,n,r}))$ is equal to the dimension of the corresponding tropical prevariety $\cap_{i = 1}^{s}  \textbf{V}(\mathcal{T}(m_{i}))$.
\end{thm}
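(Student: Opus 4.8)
The equivalence has a trivial direction and a substantive one. If the $r\times r$ minors form a tropical basis, then $\mathcal{T}(\mathbf{V}(I_{m,n,r}))$ and $\bigcap_{i=1}^{s}\mathbf{V}(\mathcal{T}(m_i))$ are equal as sets and hence have the same dimension. So the plan is to prove the contrapositive of the other direction: if the minors are not a tropical basis, then $\dim\bigcap_{i}\mathbf{V}(\mathcal{T}(m_i)) > \dim\mathcal{T}(\mathbf{V}(I_{m,n,r}))$ — and since the tropical variety is always contained in the tropical prevariety, it suffices to produce a subset of the prevariety whose dimension exceeds $\dim\mathcal{T}(\mathbf{V}(I_{m,n,r}))$. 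By Shitov's Theorem the minors fail to be a tropical basis exactly when $4\le r<\min(m,n)$ and one is not in the case $r=4$, $\min(m,n)\le 6$; after transposing we may assume $m\le n$, so this amounts to: $r=4$ with $m\ge 7$, or $r\ge 5$ with $m\ge r+1$; in either case $m,n\ge r+1$.

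For the dimension of the variety I would use that $\mathcal{T}(\mathbf{V}(I_{m,n,r}))$ — the set of $m\times n$ matrices of Kapranov rank at most $r-1$ — is the tropicalization of the classical determinantal variety of matrices of rank $\le r-1$, an irreducible variety of dimension $(r-1)(m+n-r+1)$ whose locus with all entries nonzero is dense; hence $\dim\mathcal{T}(\mathbf{V}(I_{m,n,r}))=(r-1)(m+n-r+1)$. For a lower bound on the dimension of the prevariety — the set of matrices of tropical rank at most $r-1$ — I would argue as in the subsection on base cases: given a matrix $A$ of tropical rank $r-1$, form the linear subspace cut out by the equations identifying, for each $r\times r$ submatrix of $A$, the tropical monomials that realize its tropical determinant. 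Inside a sufficiently small ball around $A$, every matrix of this linear subspace has the very same minimizing permutations as $A$ for each $r\times r$ submatrix, so every such submatrix remains tropically singular and tropical rank stays $\le r-1$; thus the dimension of this linear subspace is a lower bound for $\dim\bigcap_i\mathbf{V}(\mathcal{T}(m_i))$.

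The base cases come from the foundational examples. For $(m,n,r)=(7,7,4)$ take $A$ to be the cocircuit matrix of the Fano matroid (tropical rank $3$, Kapranov rank $4$): the associated linear subspace has dimension $34$ — the explicit computation of the base-cases subsection — whereas $\dim\mathcal{T}(\mathbf{V}(I_{7,7,4}))=3\cdot 11=33$. For $(m,n,r)=(6,6,5)$ take $A$ to be Shitov's $6\times 6$ matrix (tropical rank $4$, Kapranov rank $5$): the associated linear subspace has dimension $33$, whereas $\dim\mathcal{T}(\mathbf{V}(I_{6,6,5}))=4\cdot 8=32$. So the strict inequality holds at these two triples. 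For the induction, Lemma 6 says that adjoining a row or a column makes the dimension of the tropical prevariety of the $r\times r$ minors grow by at least $r-1$, while $(r-1)(m+n-r+1)$ grows by exactly $r-1$ under the same operation; and Lemma 7 says that adjoining a row and a column and incrementing $r$ makes the dimension of the prevariety grow by at least $m+n+1$, while $(r-1)(m+n-r+1)$ grows by exactly $m+n+1$ under that operation. So the strict inequality, once established at a triple, propagates to every triple reachable from it. Starting from $(7,7,4)$ and adjoining rows and columns reaches every $(m,n,4)$ with $m,n\ge 7$; starting from $(6,6,5)$ and applying Lemma 7 repeatedly reaches $(k+1,k+1,k)$ for every $k\ge 5$, and then adjoining rows and columns reaches every $(m,n,r)$ with $m,n\ge r+1$. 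These are exactly the triples on which the minors are not a tropical basis, which finishes the proof.

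The step I expect to be the real obstacle is the verification of the base cases — and within that, not so much the two dimension counts ($34$ and $33$), which are finite computations, as the claim that the linear subspaces genuinely sit inside the respective tropical prevarieties: one must be sure that no small displacement of $A$ along the subspace accidentally renders some $r\times r$ submatrix tropically nonsingular, which is precisely the point of restricting to a ball on which the minimizing permutations of every submatrix are frozen. A second point requiring care is the bookkeeping in the inductive step: one must check that the growth of $(r-1)(m+n-r+1)$ under the two enlargement operations is equal to — not merely bounded above by — the quantities $r-1$ and $m+n+1$ appearing in Lemmas 6 and 7, so that the strict inequality is carried over intact at each step, and that the reachability argument does indeed cover Shitov's complete list of non-tropical-basis triples.
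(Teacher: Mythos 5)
Your proposal is correct and follows essentially the same route as the paper: the same dimension formula $(r-1)(m+n-r+1)$ for the tropicalized determinantal variety, the same two base cases $(7,7,4)$ and $(6,6,5)$ with linear-space dimensions $34$ and $33$ (the paper computes these for row/column-permuted, symmetrized versions of the Fano and Shitov matrices, which changes nothing), and the same induction via Lemmas 6 and 7 matched against the exact growth of the variety's dimension. The only addition is your explicit statement of the trivial direction, which the paper leaves implicit.
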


\begin{proof}
  Denote by $M_{m,n,r}$ the affine determinantal variety of $m \times n$ matrices with rank less than $r$. It is a standard result in algebraic geometry (\cite{h} Proposition 12.2, for example) that the dimension of $M_{m,n,r}$ is $(m+n-r+1)(r-1)$. It was proven in \cite{bjsst} that the tropical variety $\tilde{T}_{m,n,r}$ is a pure polyhedral fan with dimension equal to that of $M_{m,n,r}$.
  
  Using these formulas and our results from Section 5.3 we compute
  \begin{center}
    $dim(\tilde{T}_{6,6,5}) = (6+6-5+1)(5-1) = 32 < 33 \leq dim(T_{6,6,5})$,
  \end{center}
  and
  \begin{center}
    $dim(\tilde{T}_{7,7,4}) = (7+7-4+1)(4-1) = 33 < 34 \leq dim(T_{7,7,4})$.
  \end{center}

  Again, using these formulas we get
  \begin{center}
    $dim(\tilde{T}_{m+1,n,r}) - dim(\tilde{T}_{m,n,r}) = (m+n-r+2)(r-1) - (m+n-r+1)(r-1) = r-1$,
  \end{center}
  similarly,
  \begin{center}
    $dim(\tilde{T}_{m,n+1,r}) - dim(\tilde{T}_{m,n,r}) = (m+n-r+2)(r-1) - (m+n-r+1)(r-1) = r-1$,
  \end{center}
  and,
  \begin{center}
    $dim(\tilde{T}_{m+1,n+1,r+1}) - dim(\tilde{T}_{m,n,r}) - (m+n-r+2)r - (m+n-r+1)(r-1) = m+n+1$.
  \end{center}
  
  These, combined with Lemmas 5.7 and 5.8, prove that if $dim(\tilde{T}_{m,n,r}) < dim(T_{m,n,r})$ then
  \begin{center}
    $dim(\tilde{T}_{m+1,n,r}) < dim(T_{m+1,n,r})$,
   
    $dim(\tilde{T}_{m,n+1,r}) < dim(T_{m,n+1,r})$,

    $dim(\tilde{T}_{m+1,n+1,r+1}) < dim(T_{m+1,n+1,r+1})$.
  \end{center}

  From these results we may conclude $dim(\tilde{T}_{m,n,r}) < dim(T_{m,n,r})$ when $min(m,n) = 6$ and $r = 5$, or when $min(m,n) > 6$ and $4 \leq r < min(m,n)$. This covers all cases where the $r \times r$ minors do not form a tropical basis.
\end{proof}

We have a similar theorem for symmetric matrices.

\begin{thm}
  If $m_{1},\ldots,m_{s}$ are the $r \times r$ minors of an $n \times n$ symmetric matrix of variables, and $J_{n,r} = (m_{1},\ldots,m_{s})$ is the corresponding determinantal ideal, then for $4 < r < n$ the dimension of the tropical variety $\mathcal{T}(\textbf{V}(J_{n,r}))$ is less than the dimension of the corresponding tropical prevariety $\cap_{i = 1}^{s}  \textbf{V}(\mathcal{T}(m_{i}))$.
\end{thm}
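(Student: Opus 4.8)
The plan is to mirror the proof of Theorem 4 and run an induction from a single base case, using Lemmas 8 and 9 for the inductive step. Set $U(n,r) := \binom{n+1}{2} - \binom{n-r+2}{2}$, the dimension of the variety of symmetric $n \times n$ matrices of rank $< r$ (see, e.g., \cite{h}). By the general fact that the tropicalization of a variety has dimension at most that of the variety (one expects equality, and purity of the fan, exactly as in \cite{bjsst} for the ordinary case, but only the inequality is needed here), $\dim \tilde S_{n,r} \le U(n,r)$. A direct computation gives $U(n+1,r) = U(n,r) + (r-1)$ and $U(n+1,r+1) = U(n,r) + (n+1)$, matching exactly the increments appearing in Lemmas 8 and 9.

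For the base case $(n,r) = (6,5)$ we have $\dim \tilde S_{6,5} \le U(6,5) = \binom{7}{2} - \binom{3}{2} = 18$, while the base-case discussion above exhibits a $6 \times 6$ symmetric matrix lying in $S_{6,5}$ at which $S_{6,5}$ has local dimension at least $19$ (it contains the intersection of a small ball with a $19$-dimensional linear space); hence $\dim \tilde S_{6,5} \le 18 < 19 \le \dim S_{6,5}$. That matrix moreover has a genuinely tropically nonsingular $4 \times 4$ submatrix, the principal one on rows and columns $\{2,3,4,5\}$, so Lemma 8 applies to it. Every pair $(n,r)$ with $4 < r < n$ is reached from $(6,5)$ by performing the move $(n,r) \mapsto (n+1, r+1)$ (Lemma 5, hence Lemma 9) exactly $r-5$ times to arrive at $(r+1,r)$, and then the move $(n,r) \mapsto (n+1, r)$ (Lemma 4, hence Lemma 8) exactly $n-r-1$ times. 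Tracking a concrete matrix $A$ through these moves, starting from the base-case matrix $A_0$: the proofs of Lemmas 8 and 9 construct, near the enlarged matrix $A^{\sharp}$, a subset of the prevariety of dimension $\dim S_{A,\epsilon} + (r-1)$, respectively $\dim S_{A,\epsilon} + (n+1)$ — an embedded copy of $S_{A,\epsilon}$ together with a transverse linear family of row/column rescalings — so $\dim S_{A^{\sharp},\epsilon} \ge \dim S_{A,\epsilon} + (\text{increment})$. Combining this with the increments of $U$ and the base-case estimate $\dim S_{A_0,\epsilon} \ge U(6,5) + 1$ keeps the relation $\dim S_{A,\epsilon} \ge U(n,r) + 1 > \dim \tilde S_{n,r}$ intact at every stage, and since $\dim S_{n,r} \ge \dim S_{A,\epsilon}$ this proves the theorem for all $4 < r < n$.

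The delicate point — the real difference from Theorem 4, where a clean inequality between full prevariety dimensions sufficed — is that Lemma 8 needs a \emph{tropically} nonsingular $(r-1) \times (r-1)$ submatrix, which symmetric tropical rank $r$ does not by itself provide; the induction must therefore check that the tracked matrix retains such a submatrix after each move. This is where the bordering construction of Lemma 5 helps: since each new corner entry is chosen smaller than all previous entries, the submatrix formed from $A_0$'s good $4 \times 4$ block together with the successively appended rows and columns has a unique permutation realizing its tropical determinant, hence stays tropically nonsingular, and the row/column-appending construction of Lemma 4 (and the one inside the proof of Lemma 8) leaves that block untouched. I expect this bookkeeping, together with making precise the claim that near each image matrix the prevariety contains a subset of dimension $\dim S_{A,\epsilon}$ plus the relevant increment — which the proofs of Lemmas 8 and 9 set up but record only as inequalities between prevariety dimensions — to be the main work.
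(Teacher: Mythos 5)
Your proposal follows essentially the same route as the paper: the classical dimension formula for the symmetric determinantal variety (your $U(n,r)=\binom{n+1}{2}-\binom{n-r+2}{2}$ agrees with the paper's $(2nr-2n+3r-r^{2}-2)/2$), the base case $\dim\tilde{S}_{6,5}=18<19\leq\dim S_{6,5}$ from the $19$-dimensional linear space at the symmetric Shitov matrix, and induction over the moves $(n,r)\mapsto(n+1,r)$ and $(n,r)\mapsto(n+1,r+1)$ via Lemmas 8 and 9 with increments $r-1$ and $n+1$. If anything you are more careful than the paper on the one delicate point --- tracking a concrete matrix through the induction so that the genuinely tropically nonsingular $(r-1)\times(r-1)$ block required by Lemma 8 persists under the bordering and appending constructions, and upgrading the lemmas' global dimension inequalities to local ones at the image matrix --- which the paper's own proof of this theorem passes over silently.
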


\begin{proof}
  Denote by $Q_{n,r}$ the affine determinantal variety of symmetric $n \times n$ matrcies of rank less than $r$. It is a standard result in algebraic geometry (\cite{h} Chapter 22, Page 299) that the dimension of $Q_{n,r}$ is $(2nr-2n+3r-r^{2}-2)/2$. As in the previous theorem, the tropical variety $\tilde{S}_{n,r}$ is a pure polyhedral fan with dimension equal to that of $Q_{n,r}$.
  
  Using these formulas and our earlier result we compute
  \begin{center}
    $dim(\tilde{S}_{6,5}) = (60-12+15-25-2)/2 = 18 < 19 \leq dim(S_{6,5})$.
  \end{center}
  
  Again, using these formulas we get
  \begin{center}
    $dim(\tilde{S}_{n+1,r}) - dim(\tilde{S}_{n,r})$
    
    $= \frac{2(n+1)r-2(n+1)+3r-2-r^{2}}{2} - \frac{2nr-2n+3r-2-r^{2}}{2} = r-1$,
  \end{center}
  and,
  \begin{center}
    $dim(\tilde{S}_{n+1,r+1}) - dim(\tilde{S}_{n,r})$
    
    $= \frac{2(n+1)(r+1)-2(n+1)+3(r+1)-2-(r+1)^{2}}{2} - \frac{2nr-2n+3r-2-r^{2}}{2}$
    
    $= n+1$.
  \end{center}
  
  These, combined with Lemmas 5.9 and 5.10, prove that if $dim(\tilde{S}_{n,r}) < dim(S_{n,r})$, then
  \begin{center}
    $dim(\tilde{S}_{n+1,r}) < dim(S_{n+1,r})$,
    
    and
    
    $dim(\tilde{S}_{n+1,r+1}) < dim(S_{n+1,r+1})$.
  \end{center}
  
  From these results we may conclude $dim(\tilde{S}_{n,r}) < dim(S_{n,r})$ when $4 < r < n$.
\end{proof}

\end{document}